\renewcommand{\geq}{\geqslant}
\renewcommand{\leq}{\leqslant}
\newtheorem{theorem}{Theorem}
\newtheorem{lemma}[theorem]{Lemma}
\newtheorem{cor}[theorem]{Corollary}
\newtheorem*{lem}{Lemma}
\begin{document}
\title{Nilpotent groups with balanced presentations. II}
 
\author{J. A. Hillman}
\address{School of Mathematics and Statistics\\
     University of Sydney, NSW 2006\\
      Australia }

\email{jonathanhillman47@gmail.com}

\begin{abstract}
If $G$ is a nilpotent group with a balanced presentation and
$G\not\cong\mathbb{Z}^3$ then $\beta_1(G;\mathbb{Q})\leq2$
\cite{Hi22}.
We show that if such a group $G$ has an abelian normal subgroup $A$ 
such that $G/A\cong\mathbb{Z}^2$ then $G$ is torsion-free 
and has Hirsch length $h(G)\leq4$.
On the other hand, if $\beta_1(G;\mathbb{Q})=1$ and
$G$ has an abelian normal subgroup $A$ such that 
$G/A\cong\mathbb{Z}$ then 
$G\cong\mathbb{Z}/m\mathbb{Z}\rtimes_n\mathbb{Z}$, 
for some $m,n\not=0$ such that $m$ divides a power of $n-1$.
\end{abstract}

\keywords{balanced,  group, Hirsch length, metabelian, nilpotent}

\subjclass{20F18, 20J05,57N13}

\maketitle
A finite presentation for a group $G$ is {\it balanced\/}
if it has an equal number of generators and relations.
This notion has been mostly studied in connection with finite groups, 
but here we shall consider finitely generated infinite nilpotent groups.
In \cite{Hi22} we showed that if such a group $G$ has a balanced presentation
then either $G\cong\mathbb{Z}^3$ or 
$\beta_1(G;\mathbb{Q})\leq2$, 
and if $G$ is also metabelian then it has Hirsch length $h(G)\leq4$.
There are infinite nilpotent groups with balanced presentations
and which have non-trivial torsion,
but all known examples have Hirsch length $h=1$.
We expect that in this case the torsion subgroup should be homologically balanced,
while nilpotent groups with balanced presentations
and Hirsch length $h>1$ should be torsion-free. 
Our methods confirm this for $G$ metabelian, nilpotent
and with torsion-free abelianization.

If a group $G$ has a balanced presentation then 
$\beta_2(G;F)\leq\beta_1(G;F)$ for all fields $F$.
Although this is not a sufficient condition for $G$ to have 
a balanced presentation,
it is the most useful necessary condition
and underlies all of our arguments.
In \cite{Hi22} the emphasis was on the rational field $F=\mathbb{Q}$, 
and the torsion subgroup could be ignored.
Here we are interested in delineating the possible torsion,
and so other coefficient fields are needed.
Our strategy shall be to show that if a finitely generated nilpotent group $G$ 
has a normal subgroup $K$ with quotient $\mathbb{Z}^r$, for $r=1$ or 2,
and $\beta_2(G;\mathbb{F}_p)\leq\beta_1(G;\mathbb{F}_p)$
then $H^2(K;\mathbb{F}_p)$ has no subgroup which is a direct sum 
of non-trivial subgroups invariant under the action of $Aut(K)$.
This test seems difficult to apply, except when $K$ is abelian, 
and so our main results are restricted to groups which 
are abelian extensions of $\mathbb{Z}^r$.

The first  section presents our notation and some basic facts,
and we summarize briefly what is currently known 
about finite nilpotent groups with balanced presentations.
The next three sections prepare for working with infinite nilpotent groups.
In \S5 we show  that if $G$ has a balanced presentation and 
$h(G)=1$ then $G$ is 2-generated,
and if the torsion subgroup of $G$ is abelian then 
$G\cong\mathbb{Z}/m\mathbb{Z}\rtimes_n\mathbb{Z}$, 
for some $m,n\geq1$ (Theorem \ref{h=1}).
In the final section we show that if $G$ has an abelian normal subgroup $A$ 
such that $G/A\cong\mathbb{Z}^2$ then $G$ is torsion-free 
and $A$ has rank $\leq2$ (Corollary \ref{Z2cor}).

\section{notation and generalities}

If $G$ is a group then $G'$, $G^{ab}=G/G'$,  $\zeta{G}$ and $|G|$
shall denote the commutator subgroup,  abelianization, centre 
and order of $G$, respectively.
The Hirsch length $h(S)$ of a solvable group $S$ is the sum 
of the ranks of the abelian sections of a composition series for $S$.
If $N$ is a finitely generated nilpotent group then it
is finitely presentable and $h(N)$ is finite.

A group is {\it $d$-generated} if it can be generated by $d$ elements
and has a {\it balanced presentation\/} if it has a finite
presentation with equal numbers of generators and relations.
We shall say that $G$ is {\it homologically balanced\/}
if $G$ is finitely generated and $\beta_2(G;F)\leq\beta_1(G;F)$, 
for all fields $F$.
If $G$ is a homologically balanced nilpotent group 
then $G$ is $3$-generated \cite{Lu83}.

If $G$ has a balanced presentation then it is homologically balanced, 
and if $G$ is also finite then it must have trivial multiplicator: 
$H_2(G;\mathbb{Z})=0$.
(These assertions follow most easily from consideration of the
homology of the 2-complex associated to a balanced presentation
for the group.)

If $G$ is finitely presentable then $H_i(G;R)$
is finitely generated for $i\leq2$ and all simple coefficients $R$.
It follows easily that $\beta_i(G;\mathbb{Q})=\beta_i(G;\mathbb{F}_p)$,
for $i\leq2$ and almost all primes $p$. 
Hence $\beta_2(G;\mathbb{Q})=\beta_1(G;\mathbb{Q})$
if and only if $\beta_2(G;\mathbb{F}_p)=\beta_1(G;\mathbb{F}_p)$,
for almost all primes $p$. 

The Universal Coefficient Theorem for homology gives an exact sequence
\[
0\to{F}\otimes{H_2(G;\mathbb{Z})}\to{H_2(G;F)}\to{Tor(F,G^{ab})}\to0,
\]
for any group $G$ and field $F$, since $H_1(G;\mathbb{Z})=G^{ab}$.
If $A$ is a finitely generated abelian group and $F=\mathbb{F}_p$ then 
$Tor(\mathbb{F}_p,A)\cong{_pA}=\mathrm{Ker}(p.id_A)$.
If $A$ is finite then $A/pA$ and $\mathrm{Ker}(p.id_A)$
have the same dimension.
Hence if $G$ is finite then
$\beta_2(G;\mathbb{F}_p)\geq\beta_1(G;\mathbb{F}_p)$,
for any prime $p$, and $G$ is homologically balanced if and only if 
$H_2(G;\mathbb{Z})=0$.

If $G$ is abelian then $G=A$ and $H_2(G;\mathbb{Z})=A\wedge{A}$.
If also $F=\mathbb{F}_p$,  and $p$ is odd or if $p=2$ and 
$A$ has no summand of exponent 2 then this sequence is canonically split, 
and so 
$H_2(A;\mathbb{F}_p)\cong((A/pA)\wedge(A/pA))\oplus\mathrm{Ker}(p.id_A)$
\cite[Theorem V.6.6]{Br}.
There are abelian 2-groups for which there is no canonical splitting \cite{IZ18}.

There are similar Universal Coefficent 
exact sequences for cohomology
\[
0\to{Ext(G^{ab},F)}\to{H^2(G;F)}\to{Hom(G^{ab},F)}\to0.
\]

A nilpotent group $G$ is finite if and only if $\beta_1(G;\mathbb{Q})=0$
if and only if $h(G)=0$.
The Sylow subgroups of a finite nilpotent group $G$ are characteristic,
and $G$ is the direct product of its Sylow subgroups \cite[5.2.4]{Ro}.
It then follows from the K\"unneth Theorem that $H_2(G;\mathbb{Z})=0$ 
if and only if $H_2(P;\mathbb{Z})=0$ for all such Sylow subgroups $P$.
On the other hand, it is not clear that if $H_2(G;\mathbb{Z})=0$ then
$N$ must have a balanced presentation, 
even if this is so for each of its Sylow subgroups.
In general there may be a gap between homological 
necessary conditions and combinatorial sufficient conditions.
(The examples in \cite{HW85} of finite groups with trivial multiplicator
but without balanced presentations are not nilpotent.)

A finite abelian group $A$ is homologically balanced if and only if it is cyclic,
for if $A/pA$ is not cyclic for some prime $p$ then 
$(A/pA)\wedge(A/pA)\not=0$, and so
$\beta_2(A;\mathbb{F}_p)>\beta_1(A;\mathbb{F}_p)$,
by the Universal Coefficient exact sequences of \S1.
Finite cyclic groups clearly have balanced presentations.

If $p$ is an odd prime then every 2-generator metacyclic $p$-group 
$P$ with $H_2(P;\mathbb{Z})=0$ has a balanced presentation
\[
\langle{a,b}\mid{b^{p^{r+s+t}}=a^{p^{r+s}}},~bab^{-1}=a^{1+p^r}\rangle,
\]
where $r\geq1$ and $s,t\geq0$.
(The order of such a group is $p^{3r+2s+t}$.)
There are other metacyclic 2-groups and other $p$-groups 
with 2-generator balanced presentations.
A handful of 3-generated $p$-groups (for $p=2$ and 3)
are also known to have balanced presentations. 
(See \cite{HNO'B} for a survey of what was known in the mid-1990s.)

The finite nilpotent 3-manifold groups
$Q(8k)\times\mathbb{Z}/a\mathbb{Z}$ (with  $(a,2k)=1$)
have the balanced presentations 
\[
\langle{x,y}\mid {x^{2ka}=y^2},~yxy^{-1}=x^s\rangle,
\]
where $s\equiv1~mod~(a)$ and $s\equiv-1~mod~(2k)$.
The other finite nilpotent groups $F$ with 4-periodic cohomology
(the generalized quaternionic groups 
$Q(2^na,b,c)\times\mathbb{Z}/d\mathbb{Z}$,
with $a,b,c,d$ odd and pairwise relatively prime) have $H_2(F;\mathbb{Z})=0$, 
but we do not know whether they all have balanced presentations.

\section{unipotent automorphisms}

An automorphism $\alpha$ of an abelian group $A$ is {\it unipotent\/} if
$\alpha-id_A$ is nilpotent.
The following lemma is a particular case of a result of P. Hall \cite[5.2.1]{Ro}.

\begin{lem}
[Hall]
Let $\psi$ be an automorphism of a finitely generated nilpotent group $N$.
Then $G=N\rtimes_\psi\mathbb{Z}$ is nilpotent if and only if $\psi^{ab}$ is unipotent.
\qed
\end{lem}

We shall extend the term ``unipotent",
to say that an automorphism $\psi$ of a finitely generated 
nilpotent group is unipotent if $\psi^{ab}$ is unipotent.
Furthermore, an action $\alpha:G\to{Aut}(A)$ is unipotent if
$\alpha(g)$ is unipotent for all $g\in{G}$.

\begin{lemma}
\label{filter}
Let $N$ be a finitely generated nilpotent group which acts unipotently
on a finitely generated abelian group $A$,
and let $\mathfrak{n}$ be the augmentation ideal of $\mathbb{Z}[N]$.
Then $A$ has a finite filtration $A=A_1>\dots>A_k=A^N>A_{k+1}=0$
by $\mathbb{Z}[N]$-submodules,
where $A^N$ is the fixed subgroup and $\mathfrak{n}A_i\leq{A_{i+1}}$, 
for $i\leq{k}$.
\end{lemma}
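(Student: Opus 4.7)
The plan is to prove the stronger statement that $\mathfrak{n}^k A = 0$ for some $k \geq 1$, and then extract the filtration from the $\mathfrak{n}$-adic filtration. Granted this, set $A_i = \mathfrak{n}^{i-1}A + A^N$: each is a $\mathbb{Z}[N]$-submodule (because $\mathfrak{n}$ is a two-sided ideal and $A^N$ is $N$-invariant), the chain is descending, and $\mathfrak{n} A_i = \mathfrak{n}^i A \subseteq A_{i+1}$. Once $\mathfrak{n}^i A = 0$ the chain stabilizes at $A^N$, so after discarding repetitions and appending $0$ we obtain the required chain with strict inclusions.

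To establish $\mathfrak{n}^k A = 0$, I would induct on the nilpotency class $c$ of $N$. If $c = 1$, so $N$ is abelian, choose a finite generating set $n_1,\ldots,n_d$ for $N$; then $\mathfrak{n}$ is generated as an ideal by the pairwise commuting operators $x_i = n_i - 1$. By unipotence each $x_i$ is nilpotent on $A$, so we may pick $m$ with $x_i^m A = 0$ for all $i$. Since the $x_i$ commute, $\mathfrak{n}^k A$ is the sum of the subgroups $x_1^{e_1} \cdots x_d^{e_d} A$ with $e_1 + \cdots + e_d \geq k$; taking $k = d(m-1)+1$ forces some $e_i \geq m$ in each summand by the pigeonhole principle, and therefore $\mathfrak{n}^k A = 0$.

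For the inductive step, let $Z = \zeta N$, which is nontrivial since $N$ is nilpotent. The base case applied to the restricted $Z$-action produces $s$ with $\mathfrak{z}^s A = 0$, where $\mathfrak{z}$ is the augmentation ideal of $\mathbb{Z}[Z]$. Because $Z$ is normal in $N$, each $\mathfrak{z}^j A$ is a $\mathbb{Z}[N]$-submodule, and the quotient $\mathfrak{z}^j A / \mathfrak{z}^{j+1}A$ carries a trivial $Z$-action, hence is a $\mathbb{Z}[N/Z]$-module on which $N/Z$ acts unipotently. Since $N/Z$ has nilpotency class $c-1$, the inductive hypothesis gives $t$ such that the $t$-th power of the augmentation ideal of $\mathbb{Z}[N/Z]$ annihilates each such quotient. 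Lifting this to $\mathbb{Z}[N]$ yields $\mathfrak{n}^t \mathfrak{z}^j A \subseteq \mathfrak{z}^{j+1}A$ for all $j$, and iterating gives $\mathfrak{n}^{ts} A \subseteq \mathfrak{z}^s A = 0$.

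The main obstacle is the passage from the hypothesis ``every $n \in N$ acts unipotently on $A$'' to the conclusion ``$\mathfrak{n}$ acts nilpotently on $A$'', which is a Kolchin-type statement. The abelian base case handles it via pigeonhole, using crucially the commutativity of the operators $x_i$; the central-series induction then bootstraps to all nilpotent $N$ without needing any deeper representation-theoretic machinery.
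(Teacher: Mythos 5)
Your proof is correct, and it takes a genuinely different (and in fact more robust) route than the paper's. The paper inducts on the length of the upper central series by splitting $A$ into $A^{\zeta N}$ and $\overline{A}=A/A^{\zeta N}$ and splicing the two recursively obtained filtrations; as written this step is delicate, because $\zeta N$ need not act trivially on $\overline{A}$ (already for $N=\mathbb{Z}$ acting on $\mathbb{Z}^3$ by a single Jordan block), so the action on $\overline{A}$ does not literally factor through $N/\zeta N$ and the induction needs an auxiliary parameter to terminate. You instead prove the stronger ideal-theoretic statement $\mathfrak{n}^kA=0$: the abelian case is done explicitly by writing $\mathfrak{n}$ in terms of the commuting nilpotent operators $n_i-1$ and applying the pigeonhole principle, and the inductive step replaces the two-step filtration $A>A^{\zeta N}>0$ by the full $\mathfrak{z}$-adic filtration $\mathfrak{z}^jA$, whose successive quotients genuinely are $\mathbb{Z}[N/\zeta N]$-modules, so the class induction goes through cleanly and yields $\mathfrak{n}^{ts}A=0$. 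Reading the filtration off as $A_i=\mathfrak{n}^{i-1}A+A^N$ then gives exactly the statement of the lemma (including $\mathfrak{n}A^N=0$ at the last step, and $A^N\neq0$ when $A\neq0$ since $\mathfrak{n}^{k-1}A\subseteq A^N$ for minimal $k$). What your approach buys is a self-contained Kolchin-type conclusion ($\mathfrak{n}$ acts nilpotently) that is stronger than the filtration statement and sidesteps the imprecision in the published sketch; the paper's version is shorter but relies on the reader to supply the same kind of repair.
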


\begin{proof}
We induct on the length of the upper central series of $N$.
The centre $\zeta{N}$ is a nontrivial abelian group which acts unipotently
on $A$, and it is easy to see that $A^{\zeta{N}}\not=0$.
The quotient $N/\zeta{N}$ acts unipotently on each of $A^{\zeta{N}}$ and 
$\overline{A}=A/A^{\zeta{N}}$, and so these each have such filtrations,
by the inductive hypothesis. The preimages of the filtration
of $\overline{A}$ in $A$ combine with the filtration of $A^{\zeta{N}}$ 
to give the required filtration.
\end{proof}

It is easy to see that the product of commuting unipotent automorphisms
is unipotent.
This observation extends to show that an action of a nilpotent group 
$N$ is unipotent if $N$  is generated by elements which act unipotently.

Our next lemma is probably known,
but we have not found a published proof.

\begin{lemma}
\label{unipotent}
Let $\psi$ be a unipotent automorphism of a finitely generated nilpotent group $N$.
Then $H_i(\psi;R)$ and $H^i(\psi;R)$ are unipotent, 
for all simple coefficients $R$ and $i\geq0$.
\end{lemma}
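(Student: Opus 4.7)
The plan is to induct on the nilpotency class $c$ of $N$, invoking the Lyndon--Hochschild--Serre (LHS) spectral sequence at each step. The workhorse observation is that unipotence is preserved by extensions and subquotients of modules: if $0\to M'\to M\to M''\to 0$ is a $\psi$-equivariant short exact sequence and $\psi|_{M'}$, $\psi|_{M''}$ are unipotent, then so is $\psi|_M$; consequently, a $\psi$-equivariant spectral sequence which is unipotent on $E^2$ gives unipotence on the $E^\infty$-subquotients and hence on the filtered abutment.

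For the base case $c=1$, i.e.\ $N=A$ abelian, apply Lemma~\ref{filter} with $\mathbb{Z}=\langle\psi\rangle$ acting on $A$: this supplies a $\psi$-invariant filtration $A=A_1>\dots>A_k>A_{k+1}=0$ on whose successive quotients $\psi$ acts as the identity. I then induct on $k$. The case $k=1$ gives $\psi=\mathrm{id}$, hence $H_i(\psi;R)=\mathrm{id}$ trivially. For $k>1$ I use the LHS spectral sequence for $0\to A_k\to A\to A/A_k\to 0$: on $E^2_{p,q}=H_p(A/A_k;H_q(A_k;R))$ the action on $H_q(A_k;R)$ is trivial (since $\psi|_{A_k}=\mathrm{id}$), while the action on $H_p(A/A_k;-)$ is unipotent for any trivial coefficient module, by the inner inductive hypothesis applied to the shorter filtration on $A/A_k$. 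So $\psi$ is unipotent on $E^2$, hence on $H_*(A;R)$, and identically for cohomology.

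For the inductive step $c\geq 2$, set $Z=\zeta N$. The induced automorphism of $N/Z$ is unipotent because $N^{ab}\twoheadrightarrow(N/Z)^{ab}$ is $\psi$-equivariant and surjective. To see that $\psi|_Z$ is unipotent, note that $Z\rtimes_\psi\mathbb{Z}$ is a subgroup of the nilpotent group $N\rtimes_\psi\mathbb{Z}$ (Lemma~\ref{uninil}), hence nilpotent, so a second application of Lemma~\ref{uninil} to $Z\rtimes_\psi\mathbb{Z}$ forces $\psi|_Z$ to be unipotent on $Z=Z^{ab}$. Now run the LHS spectral sequence $E^2_{p,q}=H_p(N/Z;H_q(Z;R))\Rightarrow H_{p+q}(N;R)$. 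Here the coefficient module $M=H_q(Z;R)$ is a trivial $N/Z$-module (since $Z$ is central) but carries a unipotent $\psi$-action by the base case. The main obstacle is handling this unipotent twist on the coefficients: I resolve it by further filtering $M$ by $\psi$-invariant submodules with trivial $\psi$-action on the subquotients, applying the outer inductive hypothesis (for the lower-class $N/Z$) with trivial coefficients to each layer, and assembling via the long exact sequences in module coefficients. Unipotence then propagates through the spectral sequence to $H_{p+q}(N;R)$, and the cohomological argument is entirely parallel using the cohomology LHS.
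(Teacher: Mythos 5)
Your argument is correct, but it is organized around a different decomposition from the paper's. The paper inducts on the length of a composition series: it uses unipotence of $\psi^{ab}$ to produce a $\psi$-fixed epimorphism of $N$ onto $\mathbb{Z}$ or $\mathbb{Z}/p\mathbb{Z}$, takes $K$ to be its kernel (so the induced action on the cyclic quotient $N/K$ is trivial), checks that $\psi|_K$ is unipotent by exactly the Lemma \ref{uninil} trick you apply to $\zeta{N}$, and then runs the LHS spectral sequence with the cyclic group on top, so that all the unipotence is concentrated in the coefficient module $H_q(K;R)$ and is handled by computing $H_p(N/K;-)$ from a resolution with diagonal $\psi$-action. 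You instead induct on nilpotency class, peeling the centre off the bottom; this forces you (a) to treat the abelian case as a genuine base case, which you do via the filtration of Lemma \ref{filter} and an inner induction on its length, and (b) to cope with a unipotent rather than trivial $\psi$-twist on the coefficients $H_q(\zeta{N};R)$, which you correctly neutralize by a further $(\psi-\mathrm{id})$-adic filtration of the coefficient module and the long exact coefficient sequences. Both routes rest on the same two facts --- unipotence passes to subquotients and extensions (hence through a first-quadrant spectral sequence with finite filtration in each total degree), and the restriction of $\psi$ to a characteristic subgroup stays unipotent via Lemma \ref{uninil} --- and both silently need the small upgrade of the inductive hypothesis from coefficients $R$ to arbitrary trivial coefficient modules, which follows from naturality of universal coefficients. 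The paper's version is somewhat more uniform, since cyclic quotients with trivial induced action avoid a separate abelian base case and reduce the outer homology to a two-term (Wang-type) complex; yours makes the role of the central series and of Lemma \ref{filter} more transparent.
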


\begin{proof}
If $N$ is cyclic then the result is clear.
In general,  $N$ has a composition series with cyclic
subquotients $\mathbb{Z}/p\mathbb{Z}$, where $p=0$ or is prime.
We shall induct on the number of terms in such a composition series.
If $N$ is infinite then $\psi$ acts unipotently on $Hom(N,\mathbb{Z})$
and so fixes an epimorphism to $\mathbb{Z}$;
if $N$ is finite then $\psi$ fixes an epimorphism to $\mathbb{Z}/p\mathbb{Z}$,
for any $p$ dividing the order of $N$.

Let $K$ be the kernel of such an epimorphism.
Then $\psi(K)=K$, by the choice of $\psi$;  let $\psi_K=\psi|_K$.
This is a unipotent automorphism of $K$, 
by Hall's Lemma \cite[5.2.10]{Ro}.
Hence the induced action of $\psi$ on $H_i(K;R)$ is unipotent,
for all $i$, by the inductive hypothesis.
Let $\Lambda=\mathbb{Z}[N/K]$ and let $B$ be a $\Lambda$-module.
Then $H_i(N/K;B)=Tor_i^\Lambda(\mathbb{Z},B)$ may be computed
from the tensor product $C_*\otimes_\mathbb{Z}B$,
where $C_*$ is a resolution of the augmentation $\Lambda$-module 
$\mathbb{Z}$.
If $B=H_i(K;R)$ then the diagonal action of $\psi$ on
each term of $C_*\otimes_\mathbb{Z}B$ is unipotent.
The result is now a straightforward consequence of the  
Lyndon-Hochschild-Serre spectral sequence for
$N$ as an extension of $N/K$ by $K$.

The argument for cohomology is similar.
\end{proof}

In fact we only need this lemma in degrees $\leq2$.
We shall usually assume that the coefficient ring is a field,
and then homology and cohomology are linear duals of each other.
Homology has an advantage deriving from the isomorphism
$G^{ab}\cong{H_1(G;\mathbb{Z})}$, 
but it is often more convenient to use cohomology instead.

\section{unipotent actions on abelian groups}

We shall find the following notion useful in many of our arguments.
Let  $G$ be a  group and $F$ a field.
Then an $F[G]$-module  $V$ is {\it canonically subsplit\/} if it contains a nontrivial direct sum of $F[G]$-submodules.
If $G$ acts unipotently on $V$ and $V$ is canonically subsplit then 
the subspaces of the summands fixed by $G/K$ are non-trivial,  
by Lemma \ref{filter}, 
and so the subspace $V^G$ fixed by $G$ has dimension $>1$.

\begin{lemma}
\label{cycad}
Let $A$ be a finitely generated abelian group and $p$ a prime
such that $A$ has non-trivial $p$-torsion and $\dim_{\mathbb{F}_p}A/pA>1$.
If $p$ is odd or if $p=2$ and $A$ has no $\mathbb{Z}/2\mathbb{Z}$ 
summand then $H_2(A;\mathbb{F}_p)$ and 
$H^2(A;\mathbb{F}_p)$ are each canonically subsplit with respect to the natural action of (subgroups of) $\mathrm{Aut}(A)$.
\end{lemma}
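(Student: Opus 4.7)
The plan is to apply directly the canonical splitting
$H_2(A;\mathbb{F}_p)\cong((A/pA)\wedge(A/pA))\oplus\mathrm{Ker}(p\cdot\mathrm{id}_A)$
recorded in \S1 from \cite[Theorem V.6.6]{Br}, which is valid precisely under the hypothesis that $p$ is odd, or that $p=2$ and $A$ has no summand of exponent 2. The two summands are natural in $A$: both $(A/pA)\wedge(A/pA)$ and $\mathrm{Ker}(p\cdot\mathrm{id}_A)$ arise from functorial constructions, so any $\alpha\in\mathrm{Aut}(A)$ restricts to an automorphism of each. Hence the decomposition is a direct sum of $\mathbb{F}_p[\mathrm{Aut}(A)]$-submodules, and the same then holds for every subgroup of $\mathrm{Aut}(A)$.

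Next I would verify that both summands are non-zero under the stated hypotheses. The hypothesis that $A$ has non-trivial $p$-torsion gives $\mathrm{Ker}(p\cdot\mathrm{id}_A)\not=0$ directly. The hypothesis $\dim_{\mathbb{F}_p}(A/pA)>1$ gives $\binom{\dim(A/pA)}{2}\geq1$, so $(A/pA)\wedge(A/pA)$ is a non-zero $\mathbb{F}_p$-vector space. Therefore $H_2(A;\mathbb{F}_p)$ contains a non-trivial direct sum of $\mathbb{F}_p[\mathrm{Aut}(A)]$-submodules, i.e.\ it is canonically subsplit.

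For the cohomology assertion I would simply dualize. Since $A$ is finitely generated, $H_2(A;\mathbb{F}_p)$ is a finite-dimensional $\mathbb{F}_p$-vector space, and the universal coefficient theorem with field coefficients yields $H^2(A;\mathbb{F}_p)\cong\mathrm{Hom}_{\mathbb{F}_p}(H_2(A;\mathbb{F}_p),\mathbb{F}_p)$. Linear duality converts a direct sum into a direct sum of duals, and carries the $\mathrm{Aut}(A)$-action to its contragredient, so each dual summand is again $\mathrm{Aut}(A)$-invariant and non-zero. Hence $H^2(A;\mathbb{F}_p)$ is canonically subsplit as well.

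There is no serious obstacle here once one is entitled to invoke the Brown splitting: the proof reduces to (a) naturality of the two summands, which makes the splitting $\mathrm{Aut}(A)$-equivariant rather than merely abstract, and (b) the two non-triviality checks. The only delicate point is the exclusion of the case $p=2$ with a $\mathbb{Z}/2\mathbb{Z}$-summand, which is exactly the case in which the splitting of the Universal Coefficient sequence fails to be canonical \cite{IZ18}; without that exclusion the summand $(A/pA)\wedge(A/pA)$ would not be $\mathrm{Aut}(A)$-invariant and the argument would collapse.
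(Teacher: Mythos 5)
Your proof is correct, and for the homology statement it coincides with the paper's: both invoke the natural splitting $H_2(A;\mathbb{F}_p)\cong((A/pA)\wedge(A/pA))\oplus\mathrm{Tor}(A,\mathbb{F}_p)$ from \cite[V.6]{Br} (for $p$ odd) and \cite{IZ18} (for $p=2$ with no exponent-$2$ summand), and then check that the two hypotheses make the two summands non-zero. Where you diverge is the cohomology statement. The paper does not dualize: it builds an explicit natural isomorphism $H^2(A;\mathbb{F}_p)\cong(A^*\wedge A^*)\oplus\mathrm{Ext}(A,\mathbb{F}_p)$ by showing that the cup product map $c_A:A^*\wedge A^*\to H^2(A;\mathbb{F}_p)$ is a monomorphism (using $Sq=0$ when $p=2$ and there is no $\mathbb{Z}/2\mathbb{Z}$ summand, via \cite{Hi87}) and that composing with the natural epimorphism $\theta:H^2(A;\mathbb{F}_p)\to\mathrm{Hom}(W,\mathbb{F}_p)$ gives an isomorphism. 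Your route — identifying $H^2(A;\mathbb{F}_p)$ with the linear dual of $H_2(A;\mathbb{F}_p)$ via the field-coefficient Universal Coefficient Theorem and observing that the annihilators of the two invariant summands give an invariant direct sum decomposition of the dual — is shorter and perfectly adequate for this lemma, since duality is natural and invariance of a summand passes to its annihilator. What the paper's more laborious cup-product description buys is reusability: the explicit generators of the cohomology splitting (cup products and squares of classes in $A^*$) are exactly what is needed in the harder Lemma~\ref{cycad2}, where $A$ does have an exponent-$2$ summand and no dualizable homology splitting is available. One small inaccuracy in your closing remark: when $p=2$ and $A$ has a $\mathbb{Z}/2\mathbb{Z}$ summand, the subobject $\mathbb{F}_2\otimes H_2(A;\mathbb{Z})$ of the Universal Coefficient sequence is still canonical; what fails is the existence of a \emph{natural complement} (a natural lift of the $\mathrm{Tor}$ quotient), not the invariance of the wedge part. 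This does not affect the validity of your argument in the cases the lemma actually covers.
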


\begin{proof} 
Let $W=(A/pA)\wedge(A/pA)$ and 
$A^*=Hom (A;\mathbb{F}_p)=H^1(A;\mathbb{F}_p)$.

Then there is a natural splitting
$H_2(A;\mathbb{F}_p)=W\oplus{Tor(A,\mathbb{F}_p)}$
if $p$ is odd \cite[Chapter V.6]{Br},
or if $p=2$ and $A$ has no $\mathbb{Z}/2\mathbb{Z}$ summand \cite{IZ18}.
There is also a natural epimorphism
$\theta:H^2(A;\mathbb{F}_p)\to{Hom(W,\mathbb{F}_p)}$,
with kernel isomorphic to $Ext(A;\mathbb{F}_p)$ 
\cite[Exercises IV.3.8 and V.6.5]{Br}.

If $p$ is odd then cup product induces a monomorphism 
$c_A:A^*\wedge{A^*}\to{H^2(A;\mathbb{F}_p)}$, 
since $A$ is abelian.
If $p=2$ then cup product defines a homomorphism 
from $A^*\odot{A^*}$ to $H^2(A;\mathbb{F}_2)$.
Since $A$ has no $\mathbb{Z}/2\mathbb{Z}$ summand, 
$Sq(a)=a\cup{a}=0$ for all $a\in{A^*}$, 
and so cup product again induces a monomorphism 
$c_A:A^*\wedge{A^*}\to{H^2(A;\mathbb{F}_p)}$ \cite{Hi87}.
It is easily seen from the formulae in \cite{Br}
that $\theta\circ{c_A}$ is an isomorphism,
and so $H^2(A;\mathbb{F}_p)$ is naturally isomorphic to
$(A^*\wedge{A^*})\oplus{Ext(A;\mathbb{F}_p)}$.

The summands are all non-trivial,
since $A$ has nontrivial $p$-torsion  and $A/pA$ is not cyclic.
Thus $H_2(A;\mathbb{F}_p)$ and 
$H^2(A;\mathbb{F}_p)$ are each canonically subsplit.
\end{proof}

The case when $A$ has a summand of exponent 2 seems more complicated, 
and we consider only the cohomology.

\begin{lemma}
\label{cycad2}
Let $A$ be a finitely generated abelian group with a nontrivial summand of exponent $2$ and such that $\dim_{\mathbb{F}_2}A/2A>1$.
Suppose that a finitely generated nilpotent group $N$ acts unipotently on $A$.
Then $\dim_{\mathbb{F}_2}H^2(A;\mathbb{F}_2)^N>1$.
\end{lemma}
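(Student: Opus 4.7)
\emph{Proof plan.} I will produce two linearly independent $N$-fixed classes in $H^2(A;\mathbb{F}_2)$, separated by the $\mathrm{Aut}(A)$-equivariant universal coefficient sequence
\[
0 \to \mathrm{Ext}(A,\mathbb{F}_2) \to H^2(A;\mathbb{F}_2) \to \mathrm{Hom}(A\wedge A,\mathbb{F}_2) \to 0:
\]
one class in $\mathrm{Ext}(A,\mathbb{F}_2)$ and one with nonzero image in $\mathrm{Hom}(A\wedge A,\mathbb{F}_2)$.

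Since $A$ has a summand of exponent $2$ it has a cyclic summand of even order, so $\mathrm{Ext}(A,\mathbb{F}_2)\neq 0$. By Lemma \ref{unipotent} the $N$-action on $H^2(A;\mathbb{F}_2)$ is unipotent and restricts to $\mathrm{Ext}(A,\mathbb{F}_2)$, so Lemma \ref{filter} supplies a nonzero $c_1 \in \mathrm{Ext}(A,\mathbb{F}_2)^N$.

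For the second class, set $V = A/2A$; then $V^* = H^1(A;\mathbb{F}_2)$ carries a unipotent $N$-action with $\dim_{\mathbb{F}_2}V^*\geq 2$. If $\dim V^{*N}\geq 2$, pick linearly independent $\xi_1, \xi_2 \in V^{*N}$: the cup product $\xi_1\cup\xi_2$ is $N$-fixed, and under the edge map it corresponds to the alternating form $(a,b)\mapsto \xi_1(a)\xi_2(b)+\xi_1(b)\xi_2(a)$ on $A\wedge A$, which is nonzero since $\xi_1,\xi_2$ are linearly independent in $V^*$. If $\dim V^{*N}=1$, write $V^{*N}=\mathbb{F}_2\xi$; Lemma \ref{filter} applied to the nonzero unipotent quotient $V^*/V^{*N}$ yields a fixed $\bar\eta\neq 0$, which lifts to $\eta\in V^*$ satisfying $\phi^*\eta = \eta + c_\phi\xi$ for some $c_\phi\in\mathbb{F}_2$, for each $\phi\in N$. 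Using $c_\phi^2 = c_\phi$ over $\mathbb{F}_2$,
\[
\phi^*(\eta^2 + \eta\cup\xi) = (\eta+c_\phi\xi)^2 + (\eta+c_\phi\xi)\cup\xi = \eta^2 + \eta\cup\xi + (c_\phi^2 + c_\phi)\xi^2 = \eta^2 + \eta\cup\xi,
\]
so $\eta^2 + \eta\cup\xi$ is $N$-fixed. Its image in $\mathrm{Hom}(A\wedge A,\mathbb{F}_2)$ equals that of $\eta\cup\xi$ (since $\eta^2$ maps to the zero form), which is nonzero because $\eta,\xi$ are linearly independent in $V^*$.

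In either case the second class lies outside $\mathrm{Ext}(A,\mathbb{F}_2)$ and together with $c_1$ spans a $2$-dimensional subspace of $H^2(A;\mathbb{F}_2)^N$, giving the desired inequality. The main obstacle is the case $\dim V^{*N}=1$: neither $\eta^2$ nor $\eta\cup\xi$ is individually $N$-fixed, but the $\mathbb{F}_2$-identity $c_\phi^2 + c_\phi = 0$ makes their sum invariant. This is the characteristic-$2$ feature attached to the exponent-$2$ summand that obstructs a canonical splitting as in Lemma \ref{cycad} and forces a separate treatment here.
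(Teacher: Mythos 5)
Your proof is correct, but it takes a genuinely different route from the paper's. The paper first writes $A\cong B\oplus E$ with $E$ the exponent-$2$ part, works inside the symmetric square $A^*\odot A^*$ via the cup-product map $c_A$ of \cite{Hi87} (whose kernel is identified with $B^*$), and then splits into two cases: for $B\neq0$ it exhibits two canonical submodules of $H^2(A;\mathbb{F}_2)$ with trivial intersection (the images of $B^*\odot A^*$ and of $Sq(E^*)$) and invokes the ``canonically subsplit'' observation, while for $B=0$ it filters $A^*$ as in Lemma \ref{filter} and writes down the explicit invariant elements $b\odot b$ and $a\odot(a+b)$. You instead argue uniformly through the universal coefficient sequence: one fixed class from $\mathrm{Ext}(A,\mathbb{F}_2)\neq0$ (which only needs $A$ to have nontrivial $2$-torsion, not an exponent-$2$ summand), and a second fixed class with nonzero image in $\mathrm{Hom}(A\wedge A,\mathbb{F}_2)$ built from cup products of degree-one classes. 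Your key step in the case $\dim V^{*N}=1$ --- the invariance of $\eta^2+\eta\cup\xi=\eta\cup(\eta+\xi)$ --- is exactly the paper's $a\odot(a+b)$ device transplanted from the symmetric square into $H^2$, so the essential characteristic-$2$ idea is shared. What your route buys: no decomposition of $A$, no appeal to the computation of $\ker c_A$ from \cite{Hi87}, and a visibly weaker hypothesis (any $2$-torsion suffices), so your argument also recovers the $p=2$ case of Corollary \ref{cycboth}. What the paper's route buys: when $B\neq0$ it produces genuinely canonical, $\mathrm{Aut}(A)$-invariant complementary submodules, i.e.\ the stronger ``canonically subsplit'' conclusion used elsewhere in the paper, whereas your two classes are merely $N$-fixed. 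The only points you should make explicit are that $\mathrm{Ext}(A,\mathbb{F}_2)$ is $N$-invariant with unipotent induced action by naturality of the universal coefficient sequence (so Lemma \ref{filter} applies to it), and that the identification of the image of $\xi_1\cup\xi_2$ with the alternating form $\xi_1\wedge\xi_2$ is the standard formula from \cite{Br} that the paper itself uses in Lemma \ref{cycad}.
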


\begin{proof}
We may assume that $A\cong{B}\oplus{E}$,
where $E\cong(\mathbb{Z}/2\mathbb{Z})^s\not=0$
and $B$ has no summand of order 2. 
The subspace $B^*$ of $A^*=Hom(A,\mathbb{F}_2)=H^1(A;\mathbb{F}_2)$ 
consisting of homorphisms which factor through homomorphisms to 
$\mathbb{Z}/4\mathbb{Z}$ is canonical.
Clearly $B^*\cong{Hom(B,\mathbb{F}_2)}$ and 
$A^*/B^*\cong{E^*}=Hom(E,\mathbb{F}_2)$.
Hence $A^*\cong{B^*}\oplus {E^*}$, but this splitting is not canonical.
Cup product induces a homomorphism 
$c_A:A^*\odot{A^*}\to{H^2(A;\mathbb{F}_2)}$,
with kernel $2A/4A\cong{B^*}$,
since $A$ is abelian  \cite{Hi87}.
There is also a natural squaring map 
$Sq:A^*\to{H^2(A;\mathbb{F}_2)}$ with kernel $B^*$.

If $B=0$ then $A$ is an elementary 2-group and $A^*=E^*$,
and $c_A$ is a monomorphism.
Let $A_1>\dots>A_{k+1}=0$ be a filtration of $A^*$ by 
$\mathbb{F}_p[N]$-submodules, as in Lemma  \ref{filter}.
Then $A_k\odot{A_k}$ is fixed by $N$.
If $\dim_{\mathbb{F}_2}A_k>1$  then
$\dim_{\mathbb{F}_2}A_k\odot{A_k}\geq3$.
If $A_k$ has dimension 1,  and is generated by $b$ then
$b\odot{b}$ is fixed by $N$.
If $a\in{A_{k-1}}$ then each element of $N$ either fixes $a$ or 
sends it to $a+b$.
In either case $a\odot(a+b)$ is fixed by $N$.
Since $\dim_{\mathbb{F}_2}A_{k-1}\geq2$ the subspace
generated by $\{a\odot(a+b)\mid{a}\in{A_{k-1}}\}\cup\{b\odot{b}\}$
is fixed by $N$,
and so $\dim_{\mathbb{F}_2}H^2(A;\mathbb{F}_2)^N>1$.

The images of $B^*\odot{A^*}$ and $Sq(A^*)=Sq(E^*)$ are
canonical submodules of $H^2(A;\mathbb{F}_2)$,
with trivial intersection.
Hence they are invariant under the action 
of automorphisms of $A$,
and so if $B\not=0$ then we again have
$\dim_{\mathbb{F}_2}H^2(A;\mathbb{F}_2)^N>1$.
\end{proof}

\begin{cor}
\label{cycboth}
Let $A$ be a finitely generated abelian group, $\psi$ be  a unipotent
automorphism of $A$, and $p$ be a prime. 
If $A$ has non-trivial $p$-torsion and $\dim_{\mathbb{F}_p}A/pA>1$ 
then $\dim_{\mathbb{F}_p}\mathrm{Ker}(H_2(\psi;\mathbb{F}_p)-I)=
\dim_{\mathbb{F}_p}\mathrm{Ker}(H^2(\psi;\mathbb{F}_p)-I)>1$.
\end{cor}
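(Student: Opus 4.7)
The plan is to deduce the corollary from Lemmas \ref{cycad} and \ref{cycad2} by specializing to the cyclic, hence nilpotent, subgroup $N=\langle\psi\rangle\leq\mathrm{Aut}(A)$, which acts unipotently on $A$ by hypothesis. By Lemma \ref{unipotent} applied to the single generator $\psi$, the induced actions of $\psi$ on $H_2(A;\mathbb{F}_p)$ and $H^2(A;\mathbb{F}_p)$ are also unipotent, and in particular $N$-fixed vectors coincide with $\psi$-fixed vectors, i.e.\ with kernels of $H_i(\psi;\mathbb{F}_p)-I$.

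I would first dispose of the equality of the two kernel dimensions. Since $\mathbb{F}_p$ is a field and $A$ is finitely generated, $H_2(A;\mathbb{F}_p)$ is finite-dimensional, and the $\mathbb{F}_p$-cochain complex is the $\mathbb{F}_p$-dual of the chain complex. This gives a natural, $\mathrm{Aut}(A)$-equivariant isomorphism $H^2(A;\mathbb{F}_p)\cong\mathrm{Hom}_{\mathbb{F}_p}(H_2(A;\mathbb{F}_p),\mathbb{F}_p)$. For any linear endomorphism $T$ of a finite-dimensional vector space, $T$ and its transpose $T^\ast$ have equal rank, hence $\mathrm{Ker}(T-I)$ and $\mathrm{Ker}(T^\ast-I)$ have the same dimension. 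Applied with $T=H_2(\psi;\mathbb{F}_p)$, this yields the first equality in the statement.

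It remains to show $\dim_{\mathbb{F}_p}\mathrm{Ker}(H^2(\psi;\mathbb{F}_p)-I)>1$, equivalently that the $N$-fixed subspace of $H^2(A;\mathbb{F}_p)$ has dimension at least $2$. If $p$ is odd, or $p=2$ and $A$ has no $\mathbb{Z}/2\mathbb{Z}$ summand, Lemma \ref{cycad} says that $H^2(A;\mathbb{F}_p)$ is canonically subsplit with respect to subgroups of $\mathrm{Aut}(A)$, and in particular with respect to $N$. The observation recorded just before Lemma \ref{cycad} (a direct application of Lemma \ref{filter} to each summand of a canonical subsplitting, using that the $N$-action on $H^2(A;\mathbb{F}_p)$ is unipotent) then gives the desired bound. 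In the remaining case $p=2$ with a non-trivial summand of exponent $2$, Lemma \ref{cycad2} applies verbatim to the finitely generated nilpotent group $N=\langle\psi\rangle$ and delivers the bound directly.

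There is no genuine obstacle: the corollary is essentially a bookkeeping step combining the two preceding lemmas, the unipotence of induced maps from Lemma \ref{unipotent}, and the standard $\mathrm{Aut}(A)$-equivariant duality between $H_2$ and $H^2$ with $\mathbb{F}_p$-coefficients.
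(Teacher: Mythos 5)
Your proof is correct and follows essentially the same route as the paper's: reduce to the cyclic subgroup $N=\langle\psi\rangle$, identify fixed subspaces with kernels of $H_i(\psi;\mathbb{F}_p)-I$, get the equality of dimensions from the rank of a linear map equalling that of its transpose under the $\mathbb{F}_p$-duality of $H_2$ and $H^2$, and then invoke Lemmas \ref{cycad} and \ref{cycad2} together with the remark preceding Lemma \ref{cycad}. If anything, your three-way case split is slightly more careful than the paper's closing sentence, which attributes the whole $p=2$ case to Lemma \ref{cycad2} even though the subcase with no $\mathbb{Z}/2\mathbb{Z}$ summand is actually covered by Lemma \ref{cycad}.
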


\begin{proof}
Let $N$ be the cyclic subgroup of $Aut(A)$ generated by $\psi$.
We shall write $H_i(\psi)$ and $H^j(\psi)$ instead of 
$H_i(\psi;\mathbb{F}_p)$ and $H^j(\psi;\mathbb{F}_p)$,
for simplicity of notation.
Then $H_i(A;\mathbb{F}_p)^N=\mathrm{Ker}(H_i(\psi)-I)$
and $H^j(A;\mathbb{F}_p)^N=\mathrm{Ker}(H^j(\psi)-I)$, for any $i$.
If $\varphi$ is an endomorphism of a finite dimensional vector space $V$
then $\dim\mathrm{Cok}(\varphi)=\dim\mathrm{Ker}(\phi)$
and if $\varphi^*$ is the induced endomorphism of the dual vector space 
$V^*$ then $\varphi^*$ and $\varphi$ have the same rank.
Hence the corollary follows from Lemma \ref{cycad}, if $p$ is odd,
and from Lemma \ref{cycad2}, if $p=2$.
\end{proof} 

It does not seem obvious that 
$\dim_{\mathbb{F}_p}H_2(A;\mathbb{F}_p)^N$ and
$\dim_{\mathbb{F}_p}H^2(A;\mathbb{F}_p)^N$ are equal
when $N$ is not cyclic.

If $\dim_{\mathbb{F}_p}A/pA\geq4$ then the restriction of 
$H_2(\psi;\mathbb{F}_p)-I$ to $(A/pA)\wedge(A/pA)$ 
has kernel of dimension $>1$, 
and so $\dim_{\mathbb{F}_p}\mathrm{Ker}(H_2(\psi;\mathbb{F}_p)-I)>1$. 
In \cite{Hi22} a related observation for free abelian groups of rank 
$\geq4$ is used to show that if $G$ is a metabelian nilpotent group
with $h(G)>4$ then $\beta_2(G;\mathbb{Q})>\beta_1(G;\mathbb{Q})$.

One of the difficulties in extending the approach of this paper 
to more general nilpotent groups is the lack of  an analogue 
to the above lemmas for non-abelian $p$-groups.
If  $T$ is one of the 2-generator metacyclic $p$-groups of \S2 
then $H^2(T;\mathbb{F}_p)$ has no canonically split subspace, 
and such groups do arise as the torsion subgroups of homologically balanced nilpotent groups $G$ with $h(G)=1$.
(See the final paragraphs of \S5 below.)
Can we at least use such an argument to show that the torsion subgroup
must be homologically balanced?

\section{wang sequence estimates}

If $G$ is a finitely generated infinite nilpotent group then 
there is an epimorphism $f:G\to\mathbb{Z}$, 
and so $G\cong{K}\rtimes_\psi\mathbb{Z}$,
where $\psi$ is an automorphism of $K=\mathrm{Ker}(f)$ 
determined by conjugation in $G$.
The homology groups $H_i(K;R)=H_i(G;R[G/K])$ are $R[G/K]$-modules, 
with a generator $t$ of $G/K\cong\mathbb{Z}$ acting via $H_i(\psi;R)$.
The long exact sequence of homology
associated to the short exact sequence of coefficients
\[
0\to{R[G/K]}\xrightarrow{t-1}{R[G/K]}\to{R}\to0
\]
is the {\it Wang sequence} 
\[
H_2(K;R)\xrightarrow{H_2(\psi;R)-I}{H_2(K;R)}\to{H_2(G;R)}\to{H_1(K;R)}\xrightarrow{H_1(\psi;R)-I}{H_1(K;R)}\to
\]
\[\to{H_1(G;R)}\to{R}\to0.
\]
There is a similar Wang sequence for cohomology.
(These are special cases of the  Lyndon-Hochschild-Serre spectral sequences for the homology and cohomology with coefficients $R$ of $G$ as an extension of $\mathbb{Z}$ by $K$.)

\begin{lemma}
\label{wang app}
Let $G\cong{K}\rtimes_\psi\mathbb{Z}$ be a finitely generated nilpotent group,
and let $F$ be a field.
Then  
\begin{enumerate}
\item$\dim_F\mathrm{Cok}(H_2(\psi;F)-I)=
\dim_F\mathrm{Ker}(H^2(\psi;F)-I)=\beta_2(G)-\beta_1(G)+1$,
and so $\beta_2(G;F)\geq\beta_1(G;F)-1$,
with equality if and only if $\beta_2(K;F)=0$;
\item{}if $\beta_2(G;F)=\beta_1(G;F)$ then $H_2(K;F)$ is cyclic 
as a $F[G/K]$-module;
\item{}$\beta_1(G;F)=1\Leftrightarrow\beta_2(G;F)=0$, 
and then $K$ is finite,  $\beta_1(K;F)=0$, and $h(G)=1$;
\item{}if $H_2(G;\mathbb{Z})=0$ then $G\cong\mathbb{Z}$.
\end{enumerate}
\end{lemma}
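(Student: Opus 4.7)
The engine for all four parts is the Wang sequence displayed just before the lemma, combined with Lemma \ref{unipotent}. Since $K=\mathrm{Ker}(f)$ sits inside a finitely generated nilpotent group it is itself polycyclic, so every $H_i(K;F)$ is finite-dimensional; and since $\psi$ is conjugation by a lift of a generator of $\mathbb{Z}$ inside a nilpotent group, Lemma \ref{uninil} makes $\psi^{ab}$ unipotent and Lemma \ref{unipotent} then makes each $H_i(\psi;F)$ and $H^i(\psi;F)$ a unipotent endomorphism. The recurring linear-algebra input is that for a unipotent $T$ on a finite-dimensional $F$-vector space, $\dim\mathrm{Ker}(T-I)=\dim\mathrm{Cok}(T-I)$ (each counts the Jordan blocks of $T$), and $T-I$ is surjective only when the ambient space is zero.

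For (1), I would excise from the Wang sequence the short exact sequence
\[
0\to\mathrm{Cok}(H_2(\psi;F)-I)\to H_2(G;F)\to\mathrm{Ker}(H_1(\psi;F)-I)\to0
\]
together with the identity $\beta_1(G;F)=1+\dim\mathrm{Cok}(H_1(\psi;F)-I)$. Rank-nullity gives $\dim\mathrm{Ker}(H_1(\psi;F)-I)=\dim\mathrm{Cok}(H_1(\psi;F)-I)=\beta_1(G;F)-1$ and hence $\dim\mathrm{Cok}(H_2(\psi;F)-I)=\beta_2(G;F)-\beta_1(G;F)+1$. The matching equality with $\dim\mathrm{Ker}(H^2(\psi;F)-I)$ is the duality between a cokernel and the kernel of the transpose, applied to $(H_2(\psi;F)-I)^*=H^2(\psi;F)-I$. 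Unipotence enters in the equality case: $\beta_2(G;F)=\beta_1(G;F)-1$ forces $H_2(\psi;F)-I$ to be surjective, and a surjective nilpotent operator lives only on the zero space, so $\beta_2(K;F)=0$. Part (2) is then immediate, since $\dim\mathrm{Cok}(H_2(\psi;F)-I)=1$ counts the Jordan blocks of the unipotent operator $H_2(\psi;F)$, there is a single block, and this is the same as saying $H_2(K;F)$ is cyclic over $F[G/K]=F[t^{\pm1}]$.

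For (3) I would use that any infinite finitely generated nilpotent group has $G^{ab}$ of positive torsion-free rank, so $\beta_1(G;F)\geq1$ for every field $F$; combined with (1) this yields the equivalence with $\beta_2(G;F)=0$. The Wang identity for $\beta_1$ then gives $\mathrm{Cok}(H_1(\psi;F)-I)=0$, and the nilpotent-surjective-is-zero argument again gives $\beta_1(K;F)=0$. If $K$ were infinite then $K^{ab}$ would have positive torsion-free rank and $K^{ab}\otimes F$ would be nonzero for every $F$, a contradiction; so $K$ is finite, and then $h(G)=h(K)+1=1$.

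For (4) the plan is to chain (1) with the Universal Coefficient Theorem twice. The hypothesis $H_2(G;\mathbb{Z})=0$ collapses $H_2(G;\mathbb{F}_p)$ to $\mathrm{Tor}(\mathbb{F}_p,G^{ab})$ and gives $\beta_2(G;\mathbb{F}_p)-\beta_1(G;\mathbb{F}_p)=-\mathrm{rank}(G^{ab})$; by (1) this is $\geq-1$, while the hypothesis that $G$ surjects onto $\mathbb{Z}$ forces $\mathrm{rank}(G^{ab})\geq1$. Hence equality holds throughout and (1) gives $\beta_2(K;F)=0$ for every field $F$. A second application of Universal Coefficients, now to $K$, promotes these field-coefficient vanishings to the assertions that $H_2(K;\mathbb{Z})$ is finitely generated and $p$-divisible for every $p$ (hence zero) and that $K^{ab}$ is torsion-free of rank $0$ (hence zero). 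A nilpotent group with trivial abelianization is trivial, so $K=1$ and $G\cong\mathbb{Z}$. The delicate step of the whole proof is this last one, where the integer-coefficient hypothesis on $G$ has to be translated into a field-coefficient statement, pushed through the Wang estimate, and then translated back into an integer-coefficient conclusion about $K$.
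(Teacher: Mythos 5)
Your parts (1) and (2) are correct and essentially reproduce the paper's argument: the dimension count extracted from the Wang sequence plus rank--nullity, linear duality for the cohomological statement, the fact that a surjective nilpotent operator can only live on the zero space for the equality case, and the reading of $\dim_F\mathrm{Cok}(H_2(\psi;F)-I)=1$ as ``one Jordan block'', i.e.\ cyclicity of $H_2(K;F)$ over $F[t^{\pm1}]$.

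Part (3) has a genuine gap in the forward direction. You say that $\beta_1(G;F)\geq1$ ``combined with (1) yields the equivalence with $\beta_2(G;F)=0$'', but (1) only gives $\beta_2(G;F)\geq\beta_1(G;F)-1$; when $\beta_1(G;F)=1$ this reads $\beta_2(G;F)\geq0$, which is vacuous, so only the implication $\beta_2=0\Rightarrow\beta_1=1$ comes out of (1). Your subsequent argument does show $H_1(K;F)=0$ and $K$ finite, which kills the $\mathrm{Ker}(H_1(\psi;F)-I)$ contribution to $\beta_2(G;F)$, but $\beta_2(G;F)$ also receives $\dim_F\mathrm{Cok}(H_2(\psi;F)-I)$, and you never show $H_2(K;F)=0$. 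The missing step, which the paper supplies and which is a genuine input about finite nilpotent groups rather than more Wang-sequence bookkeeping, is: $K$ is the direct product of its Sylow subgroups; if $\mathrm{char}\,F=p>0$ then $H_1(K;F)=0$ forces the Sylow $p$-subgroup to be trivial, so $|K|$ is invertible in $F$ and $H_i(K;F)=0$ for all $i\geq1$; if $\mathrm{char}\,F=0$ this vanishing is automatic since $K$ is finite. Only then does $\beta_2(G;F)=0$ follow from $\beta_1(G;F)=1$.

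For part (4) you take a genuinely different route from the paper, passing to field coefficients, applying (1) for every $\mathbb{F}_p$ and for $\mathbb{Q}$, and then reassembling integral information about $K$ via Universal Coefficients. This can be made to work, but as written it has a loose end: the vanishing of $H_2(K;F)$ for all fields gives, via Universal Coefficients, that $K^{ab}$ is torsion-free, not that it has rank $0$; for the rank you need the $\mathbb{Q}$-coefficient Wang sequence (equivalently part (3) with $F=\mathbb{Q}$) to conclude that $K$ is finite. The paper's argument is much shorter and avoids fields altogether: in the integral Wang sequence, $H_2(G;\mathbb{Z})=0$ forces $\psi^{ab}-I$ to be injective on $K^{ab}$, and an injective nilpotent endomorphism exists only on the trivial group, so $K^{ab}=0$ and hence $K=1$.
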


\begin{proof}
Part (1) follows from the Wang sequences for the homology and cohomology of $G$ as an extension of $\mathbb{Z}$ by $K$.
The endomorphisms $H_i(\psi;F)-I$ have non-trivial kernel and cokernel if $H_i(K;F)\not=0$,  since they are nilpotent,  by Lemma \ref{unipotent}.

The $F[G/K]$-module $H=H_2(K;F)$ is  finitely generated 
and is annihilated by a power of $t-1$, 
since $H_2(\psi;F)$ is unipotent.
If $\beta_2(G;F)=\beta_1(G;F)$ then $\dim_FH/(t-1)H=1$,
by the exactness of the Wang sequence.
Since $F[G/K]\cong{F}[t,t^{-1}]$ is a PID,
it follows that $H$ is cyclic as an $F[G/K]$-module.

Let $t\in{G}$ represent a generator of $G/K$.
Then $F[G/K]\cong{F}[t,t^{-1}]$ is a PID
and $H=H_2(K;F)=H_2(G;F[G/K])$ is a finitely generated 
$F[t,t^{-1}]$-module, with $t$ acting via $H_2(\psi;F)$.
This module is annihilated by a power of $t-1$, 
since $H_2(\psi;F)$ is unipotent.
If $\beta_2(G;F)=\beta_1(G;F)$ then $\dim_FH/(t-1)H=1$,
by exactness of the Wang sequence.
Since $F[G/K]\cong{F}[t,t^{-1}]$ is a PID,
it follows that $H$ is cyclic as an $F[G/K]$-module.

If $\beta_1(G;F)=1$ then $H_1(K;F)=0$, and so $K$ is finite and $h(G)=1$.
Since $K$ is finite it is the direct product of its Sylow subgroups,
and the Sylow $p$-subgroup carries the $p$-primary homology of $K$.
Hence  if $F$ has characteristic $p>1$ and $H_1(K;F)=0$
then the Sylow $p$-subgroup is trivial and $H_i(K;F)=0$, for all $i\geq1$.
If $F$ has characteristic 0 then $H_i(K;F)=0$ for all $i\geq1$ also.
In each case, $H_i(G;F)=0$, for all $i>1$, and so $\beta_2(G;F)=0$.
Conversely, if $\beta_2(G;F)=0$ then $H_1(\psi;F)-I$ is a monomorphism.
Since $H_1(\psi;F)-I$ is nilpotent,  $H_1(K;F)=0$.
Hence $K$ is finite, so $h(G)=1$, and $\beta_1(G;F)=1$.

Part (4) is similar.
If $H_2(G;\mathbb{Z})=0$ then $\psi^{ab}-I$ is a monomorphism,
and so $K^{ab}=0$.
Hence $K=1$ and $G\cong\mathbb{Z}$.
\end{proof}

In particular, if $h(G)=1$ and $T$ is the torsion subgroup of $G$ then 
$\beta_1(T;\mathbb{F}_p)>0$ if and only if $\beta_1(G;\mathbb{F}_p)>1$. 
The fact that the torsion subgroup has non-trivial image in the abelianization
does not extend to nilpotent groups $G$ with $h(G)>1$,
as may be seen from the groups
with presentation $\langle{x,y}\mid[x,[x,y]]=[y,[x,y]]=[x,y]^p=1\rangle$.

\begin{cor}
\label{wang cor}
Let $G$ be a finitely generated nilpotent group.
Then
\begin{enumerate}
\item{} $\beta_2(G;\mathbb{Q})<\beta_1(G;\mathbb{Q})$ if and only if
$h(G)=1$ or $2$;
\item{}
if $\beta_2(G;\mathbb{F}_p)<\beta_1(G;\mathbb{F}_p)$ for some prime $p$ then $G$ is infinite, $G$ has no $p$-torsion and
$h(G)=\beta_1(G;\mathbb{F}_p)=1$ or $2$. 
\end{enumerate}
\end{cor}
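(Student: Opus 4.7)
My plan is to invoke Lemma \ref{wang app} recursively. I would first observe that a strict inequality $\beta_2(G;F)<\beta_1(G;F)$ forces $G$ to be infinite: in the rational case a finite $G$ has both Betti numbers zero, and in the $\mathbb{F}_p$ case the Universal Coefficient calculation of \S1 gives $\beta_2(G;\mathbb{F}_p)\geq\beta_1(G;\mathbb{F}_p)$ whenever $G$ is finite. Hence I may write $G\cong K\rtimes_\psi\mathbb{Z}$, and Lemma \ref{wang app}(1) immediately yields $\beta_2(K;F)=0$.

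For part~(1), the forward direction splits on whether $K$ is finite or infinite. If $K$ is finite then $h(G)=1$. If $K$ is infinite, I would choose an epimorphism $K\to\mathbb{Z}$ with kernel $L$ and apply Lemma \ref{wang app}(3) to $K\cong L\rtimes\mathbb{Z}$: the condition $\beta_2(K;\mathbb{Q})=0$ forces $\beta_1(K;\mathbb{Q})=1$, $L$ finite, and $h(K)=1$, so $h(G)=2$. For the converse, starting from $h(G)\in\{1,2\}$ I would pick $K$ with $h(K)\leq 1$: if $K$ is finite then $\beta_2(K;\mathbb{Q})=0$ is automatic, while if $h(K)=1$ then $K\cong T\rtimes\mathbb{Z}$ with $T$ finite, $\beta_1(K;\mathbb{Q})=1$, and Lemma \ref{wang app}(3) again gives $\beta_2(K;\mathbb{Q})=0$. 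In both sub-cases Lemma \ref{wang app}(1) concludes $\beta_2(G;\mathbb{Q})=\beta_1(G;\mathbb{Q})-1<\beta_1(G;\mathbb{Q})$.

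For part~(2) the same case analysis over $\mathbb{F}_p$ forces $h(G)\in\{1,2\}$ and presents $K$ as either finite (when $h(G)=1$) or of the form $L\rtimes\mathbb{Z}$ with $L$ finite (when $h(G)=2$). To establish that $G$ has no $p$-torsion, I would note that $G/K\cong\mathbb{Z}$ is torsion-free, so the torsion subgroup satisfies $T(G)=T(K)$, which equals $K$ or $L$ respectively; in either event it is a finite nilpotent group $M$ with $\beta_2(M;\mathbb{F}_p)=0$. The finite-group inequality $\beta_2\geq\beta_1$ from \S1 then forces $\beta_1(M;\mathbb{F}_p)=0$, so $M^{ab}\otimes\mathbb{F}_p=0$; by the Sylow decomposition of a finite nilpotent group the Sylow $p$-subgroup $M_p$ must be trivial, giving the torsion claim. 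For the Betti number I would apply the Wang sequence on abelianizations,
\[
K^{ab}\xrightarrow{\psi_*-1}K^{ab}\to G^{ab}\to\mathbb{Z}\to 0,
\]
and tensor with $\mathbb{F}_p$; since $\psi_*-1$ is nilpotent on the at-most-one-dimensional space $K^{ab}\otimes\mathbb{F}_p$ it acts there as zero, giving $\beta_1(G;\mathbb{F}_p)=1+\beta_1(K;\mathbb{F}_p)\in\{1,2\}$, matching $h(G)$.

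The only places where I expect to need care are the identification $T(G)=T(K)$ in the Hirsch-length-two case and the Sylow step extracting triviality of $M_p$ from $M^{ab}\otimes\mathbb{F}_p=0$; beyond these the argument is bookkeeping with Lemma \ref{wang app} and the Universal Coefficient computations of \S1.
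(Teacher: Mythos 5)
Your proof is correct and follows essentially the same route as the paper: rule out finite $G$ via the Universal Coefficient inequality of \S1, write $G\cong{K}\rtimes_\psi\mathbb{Z}$, and apply Lemma \ref{wang app} iteratively to get $\beta_2(K;F)=0$ and $\beta_1(K;F)\leq1$, hence $h(G)\leq2$ together with the torsion and Betti-number claims. The small differences (proving the converse in part (1) by a second application of the lemma rather than by noting that $G$ is then a finite extension of $\mathbb{Z}^{h(G)}$, and spelling out $T(G)=T(K)$) are cosmetic.
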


\begin{proof}
If $G$ is finite then $\beta_i(G;\mathbb{Q})=0$ for all $i>0$,
and if $p$ divides the order of $G$ then it follows from the 
Universal Coefficient exact sequences of \S1 that
$\beta_2(G;\mathbb{F}_p)\geq\beta_1(G;\mathbb{F}_p)$,
since $_pG^{ab}$ and $G^{ab}/pG^{ab}$ have the same dimension.

Hence we may assume that $G$ is infinite,
and so $G\cong{K}\rtimes_\psi\mathbb{Z}$,
where $K$ is a finitely generated nilpotent group
and $\psi$ is a unipotent automorphism.
Let $F$ be a field.
We may use Lemma \ref{wang app} to show first that
$\beta_2(K;F)=0$ and then that $\beta_1(K;F)\leq1$.
Hence $\beta_1(G;F)\leq2$.

If $F=\mathbb{Q}$ then either $K$ is finite and $h(G)=1$, 
or $h(K)=1$ and $h(G)=2$.
The converse is clear, since $G$ is then a finite extension of 
$\mathbb{Z}^{h(G)}$.

Suppose that $F=\mathbb{F}_p$ for some prime $p$.
If $\beta_1(G;\mathbb{F}_p)=1$ then $K$ is finite,
so $h(G)=1$, and $\beta_2(K;\mathbb{F}_p)=0$, 
so $\beta_1(K;\mathbb{F}_p)=0$ and $K$ has no $p$-torsion.
If $\beta_1(G;\mathbb{F}_p)=2$ then $\beta_1(K;\mathbb{F}_p)=1$
and $\beta_2(K;\mathbb{F}_p)=0$, 
so $h(K)=1$ and $K$ has no $p$-torsion.
Hence $h(G)=2$ and $G$ has no $p$-torsion.
\end{proof}

The next result is a corrected version of Lemma 1 of \cite{Hi22}
(in which it was inadvertently assumed that $\beta_2(G)=\beta_1(G)$
if $G$ is homologically balanced).

\begin{lemma}
\label{h=2}
Let $G$ be a finitely generated nilpotent group 
and let $\beta=\beta_1(G;\mathbb{Q})$.
Then $G$ is homologically balanced if and only if 
$H_2(G;\mathbb{Z})$ is a quotient of $\mathbb{Z}^\beta$;
if $h(G)>2$ then $G$ is homologically balanced if and only if 
$H_2(G;\mathbb{Z})\cong\mathbb{Z}^\beta$.
\end{lemma}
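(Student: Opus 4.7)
The plan is to translate both halves of the homological balance condition through the Universal Coefficient sequences of Section~1, so as to rewrite $\beta_i(G;F)$ in terms of invariants of $G^{ab}$ and $H_2(G;\mathbb{Z})$, and then to use Corollary~\ref{wang cor} to promote ``quotient of $\mathbb{Z}^\beta$'' to ``isomorphic to $\mathbb{Z}^\beta$'' when $h(G)>2$.

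To start, I would write $G^{ab}\cong\mathbb{Z}^\beta\oplus T$ with $T$ finite, and apply the UCT of Section~1 for $F=\mathbb{Q}$ and $F=\mathbb{F}_p$. Rational coefficients immediately yield $\beta_1(G;\mathbb{Q})=\beta$ and $\beta_2(G;\mathbb{Q})=\mathrm{rk}\,H_2(G;\mathbb{Z})$. For $F=\mathbb{F}_p$ the UCT exact sequences for $H_1$ and $H_2$ each contribute a Tor term; since $T$ is finite, $\dim_{\mathbb{F}_p}(T/pT)=\dim_{\mathbb{F}_p}{}_pT$ (as noted in Section~1), so the two torsion contributions cancel in the difference, leaving
\[
\beta_2(G;\mathbb{F}_p)-\beta_1(G;\mathbb{F}_p)=\dim_{\mathbb{F}_p}(H_2(G;\mathbb{Z})/pH_2(G;\mathbb{Z}))-\beta.
\]
Consequently, $G$ is homologically balanced if and only if $\mathrm{rk}\,H_2(G;\mathbb{Z})\leq\beta$ and $\dim_{\mathbb{F}_p}(H_2(G;\mathbb{Z})/p)\leq\beta$ for every prime~$p$.

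The next step is to identify these inequalities with ``$H_2(G;\mathbb{Z})$ is a quotient of $\mathbb{Z}^\beta$''. This follows from the structure theorem for finitely generated abelian groups, which tells us that such a group $H$ can be generated by $n$ elements precisely when $\dim_{\mathbb{F}_p}(H/pH)\leq n$ for every prime $p$; the rank condition $\mathrm{rk}\,H\leq n$ is automatic, since $\mathrm{rk}\,H\leq\dim_{\mathbb{F}_p}(H/pH)$ for any $p$. This completes the first claim.

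Finally, assume $h(G)>2$. Corollary~\ref{wang cor} rules out the strict inequality $\beta_2(G;F)<\beta_1(G;F)$ both for $F=\mathbb{Q}$ and for every $F=\mathbb{F}_p$, so under homological balance we have equality in each case. The rational equality gives $\mathrm{rk}\,H_2(G;\mathbb{Z})=\beta$; writing $H_2(G;\mathbb{Z})=\mathbb{Z}^\beta\oplus T_2$, the $\mathbb{F}_p$-equalities then force $\dim(T_2/pT_2)=0$ for every prime $p$, hence $T_2=0$. The converse is immediate from the first claim. I do not expect any substantial obstacle; the argument is essentially UCT bookkeeping combined with a single appeal to Corollary~\ref{wang cor}, and the only point requiring a small amount of care is tracking the cancellation of the two Tor contributions in the $\mathbb{F}_p$-case.
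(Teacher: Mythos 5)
Your proof is correct and follows essentially the same route as the paper: the first assertion is exactly the Universal Coefficient bookkeeping the paper invokes, and the second uses Corollary \ref{wang cor} to force $H_2(G;\mathbb{Z})$ to have rank $\beta$ (the paper uses only the rational inequality $\beta_2(G;\mathbb{Q})\geq\beta$ here, whereas you also invoke the mod-$p$ equalities to kill the torsion, but a quotient of $\mathbb{Z}^\beta$ of rank $\beta$ is already free, so the two arguments are equivalent).
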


\begin{proof}
Since $G^{ab}\cong\mathbb{Z}^\beta\oplus{B}$, where $B$ is finite,
the first assertion follows from the Universal Coefficient exact sequences of \S1.
If $h(G)>2$ then $\beta_2(G;\mathbb{Q})\geq\beta$, 
by Corollary \ref{wang cor}, 
and so  $H_2(G;\mathbb{Z})$ is a quotient of $\mathbb{Z}^\beta$  
if and only if $H_2(G;\mathbb{Z})\cong\mathbb{Z}^\beta$.
\end{proof}

\section{$h=1$: virtually $\mathbb{Z}$}

We include the following simple lemma as some of the observations 
are not explicit in our primary reference \cite{Ro}.

\begin{lemma}
\label{2end}
Let $G$ be a finitely generated nilpotent group,
and let $T$ be its torsion subgroup. 
Then the following are equivalent
\begin{enumerate}
\item$\beta_1(G;\mathbb{Q})=1$;
\item$h(G)=1$;
\item $G/T\cong\mathbb{Z}$;
\item$G\cong{T}\rtimes_\psi\mathbb{Z}$, 
where $\psi$ is an automorphism of $T$.
\end{enumerate}
\end{lemma}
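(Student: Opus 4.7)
The plan is to establish the easy equivalences (2)$\Leftrightarrow$(3)$\Leftrightarrow$(4) and the implication (4)$\Rightarrow$(1) by inspection, and then to close the cycle by proving the only substantive implication, (1)$\Rightarrow$(2).

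The torsion subgroup $T$ of any finitely generated nilpotent group is finite (see \cite{Ro}), so $G/T$ is torsion-free nilpotent and $h(G)=h(G/T)$. For (2)$\Rightarrow$(3), a finitely generated torsion-free nilpotent group of Hirsch length $1$ has upper central series quotients that are torsion-free abelian with ranks summing to $1$, forcing the group to be $\mathbb{Z}$. Conversely, (3)$\Rightarrow$(2) is direct from $h(G)=h(T)+h(\mathbb{Z})=1$. Then (3)$\Rightarrow$(4) holds because $\mathbb{Z}$ is free, so the extension $1\to T\to G\to\mathbb{Z}\to1$ splits, while (4)$\Rightarrow$(3) is immediate. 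For (4)$\Rightarrow$(1), the abelianization of $T\rtimes_\psi\mathbb{Z}$ is a finite group plus a copy of $\mathbb{Z}$, so $\beta_1=1$.

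The main obstacle is (1)$\Rightarrow$(2). I would use the additivity $h(G)=\sum_{i\geq1}h(\gamma_iG/\gamma_{i+1}G)$ for the lower central series and show that $\gamma_iG/\gamma_{i+1}G$ is finite for every $i\geq2$. The iterated commutator induces $\mathbb{Z}$-bilinear surjections $G^{ab}\otimes\gamma_iG/\gamma_{i+1}G\twoheadrightarrow\gamma_{i+1}G/\gamma_{i+2}G$; for $i=1$ the antisymmetry $[x,y]=[y,x]^{-1}$ factors this through an epimorphism $G^{ab}\wedge G^{ab}\twoheadrightarrow\gamma_2G/\gamma_3G$. Under the hypothesis $\beta_1(G;\mathbb{Q})=1$ the abelianization $G^{ab}$ has torsion-free rank $1$, so $G^{ab}\wedge G^{ab}$ is finite, whence $\gamma_2G/\gamma_3G$ is finite. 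A straightforward induction via the tensor surjection (using that $G^{ab}$ is finitely generated) then propagates finiteness up the series, yielding $h(G)=h(G^{ab})=1$ and closing the cycle.
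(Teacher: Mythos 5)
Your proposal is correct, but the route through the only substantive implication differs from the paper's. The paper treats (1) and (2) uniformly: since $G$ is infinite in every case, there is an epimorphism $f\colon G\to\mathbb{Z}$ with finitely generated kernel $K$; under (1) the Wang-sequence machinery (Lemma \ref{wang app}(3), which gives $\beta_1(G;F)=1\Leftrightarrow\beta_2(G;F)=0$ with $K$ finite) forces $K$ to be finite, under (2) one has $h(K)=0$ directly, and in either case $K=T$ and $G/T\cong\mathbb{Z}$, after which (3)$\Rightarrow$(4) is the same splitting observation you make. Your argument for (1)$\Rightarrow$(2) instead avoids homology entirely: you use additivity of Hirsch length over the lower central series together with the standard commutator-induced surjections $G^{ab}\otimes(\gamma_iG/\gamma_{i+1}G)\twoheadrightarrow\gamma_{i+1}G/\gamma_{i+2}G$ (factoring through $G^{ab}\wedge G^{ab}$ at the first stage) to show all higher lower central factors are finite when $G^{ab}$ has rank one. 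This is a clean, self-contained piece of pure group theory; the paper's version is shorter in context because Lemma \ref{wang app} has already been established and is reused throughout, and it also records the extra homological information ($\beta_2(G;F)=0$, $K$ finite with $\beta_1(K;F)=0$) that later arguments need. Both proofs are sound; your handling of (2)$\Leftrightarrow$(3)$\Leftrightarrow$(4) and (4)$\Rightarrow$(1) matches the paper's in substance.
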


\begin{proof}
In each case $G$ is clearly infinite, 
and so there is an epimorphism $f:G\to\mathbb{Z}$, 
with kernel $K$, say.
Since $G$ is finitely generated, so is $K$.
If $\beta_1(G;\mathbb{Q})=1$ then $K$ is finite, by Lemma \ref{wang app}.
If $h(G)=1$ then $h(K)=0$, so $K$ is again finite.
In each case, $K=T$ and $G/T\cong\mathbb{Z}$.
If $G/T\cong\mathbb{Z}$ and $t\in{G}$ represents a generator
of $G/T$ then conjugation by $t$ defines an automorphism $\psi$
of $T$, and $G\cong{T}\rtimes_\psi\mathbb{Z}$.
Finally, it is clear that (4) implies each of (1) and (2).
\end{proof}

We could also describe the groups considered on this lemma as
the nilpotent groups which are virtually $\mathbb{Z}$,
and as the nilpotent groups with two ends.

In the next lemma we do not assume that $G$ is nilpotent.

\begin{lemma}
\label{hbh1}
Let $G\cong{T}\rtimes_\psi\mathbb{Z}$ be a homologically balanced group, where $T$ is finite.
Then $H_2(G;\mathbb{Z})$ is a finite cyclic group, and 
$|H_2(G;\mathbb{Z})|$ is divisible by the order of the torsion subgroup of $G^{ab}$.
\end{lemma}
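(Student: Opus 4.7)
The plan is to run the integral Wang sequence of \S4 for $G$ as an extension of $\mathbb{Z}$ by the finite normal subgroup $T$, and then convert the homologically balanced hypothesis into a cyclicity statement via the Universal Coefficient Theorem.

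Since $T$ is finite, $H_i(T;\mathbb{Z})$ is finite for all $i\geq1$, so the Wang sequence with $R=\mathbb{Z}$ immediately yields the short exact sequence
\[
0\to\mathrm{Cok}(H_2(\psi;\mathbb{Z})-I)\to H_2(G;\mathbb{Z})\to\mathrm{Ker}(H_1(\psi;\mathbb{Z})-I)\to 0
\]
of finite groups, together with a splitting $G^{ab}\cong\mathrm{Cok}(H_1(\psi;\mathbb{Z})-I)\oplus\mathbb{Z}$ (the quotient $\mathbb{Z}$ is free). In particular the torsion subgroup of $G^{ab}$ is identified with $\mathrm{Cok}(H_1(\psi;\mathbb{Z})-I)$ on $T^{ab}$. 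For any endomorphism $\varphi$ of a finite abelian group $A$, counting along $0\to\mathrm{Ker}(\varphi)\to A\to A\to\mathrm{Cok}(\varphi)\to0$ gives $|\mathrm{Ker}(\varphi)|=|\mathrm{Cok}(\varphi)|$; applied to $\varphi=H_1(\psi;\mathbb{Z})-I$ this shows that the divisibility claim drops straight out of the displayed extension for $H_2(G;\mathbb{Z})$.

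For cyclicity, I would feed $\mathbb{F}_p$ coefficients into the UCT. Writing $B$ for the torsion subgroup of $G^{ab}$, so that $G^{ab}\cong\mathbb{Z}\oplus B$ and $\beta_1(G;\mathbb{F}_p)=1+\dim_{\mathbb{F}_p}B/pB$, the identity $\dim_{\mathbb{F}_p}B/pB=\dim_{\mathbb{F}_p}B[p]$ for finite abelian $B$ combines with the sequence
\[
0\to H_2(G;\mathbb{Z})\otimes\mathbb{F}_p\to H_2(G;\mathbb{F}_p)\to\mathrm{Tor}(G^{ab},\mathbb{F}_p)\to 0
\]
to give $\beta_2(G;\mathbb{F}_p)=\dim_{\mathbb{F}_p}H_2(G;\mathbb{Z})/p+\beta_1(G;\mathbb{F}_p)-1$. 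The balanced hypothesis $\beta_2\leq\beta_1$ then forces $\dim_{\mathbb{F}_p}H_2(G;\mathbb{Z})/p\leq 1$ for every prime $p$; a finite abelian group satisfying this at every prime is cyclic, as one sees at once from the invariant factor decomposition (two or more invariant factors would produce a factor of $(\mathbb{Z}/p)^2$ in $A/pA$ for any $p$ dividing the smallest factor).

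The whole argument is essentially bookkeeping, with no serious obstacle anticipated: the only two delicate points are the kernel/cokernel order equality used to extract the divisibility, and the $B/pB$ versus $B[p]$ comparison used to extract the cyclicity, and both are entirely standard.
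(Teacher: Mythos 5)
Your proposal is correct and follows essentially the same route as the paper: the integral Wang sequence gives finiteness of $H_2(G;\mathbb{Z})$ and exhibits $\mathrm{Ker}(\psi^{ab}-I)$ as a quotient, whose order equals that of the torsion subgroup $\mathrm{Cok}(\psi^{ab}-I)$ of $G^{ab}$ by the kernel--cokernel count, and cyclicity comes from the Universal Coefficient sequence over $\mathbb{F}_p$ together with the balanced hypothesis at every prime. The only cosmetic difference is that the paper phrases the last step via $\dim_{\mathbb{F}_p}\mathrm{Hom}(H_2(G;\mathbb{Z}),\mathbb{F}_p)\leq1$ rather than $\dim_{\mathbb{F}_p}H_2(G;\mathbb{Z})\otimes\mathbb{F}_p\leq1$, which is the same condition for a finite group.
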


\begin{proof}
It is immediate from the Wang sequence for the integral homology 
of $G$ as an extension of $\mathbb{Z}$ by $T$ that 
$H_2(G;\mathbb{Z})$ is finite.
It is also clear that $C=\mathrm{Cok}(\psi^{ab}-I)$ is
the torsion subgroup of $G^{ab}$.
Since $T$ is finite,
$|\mathrm{Ker}(\psi^{ab}-I)|=|\mathrm{Cok}(\psi^{ab}-I)|$,
and so $|C|$ divides $|H_2(G;\mathbb{Z})|$.

If $p$ is a prime then 
$\dim_{\mathbb{F}_p}Tor(\mathbb{F}_p,G^{ab})=
\dim_{\mathbb{F}_p}Tor(\mathbb{F}_p,C)=
\beta_1(G;\mathbb{F}_p)-1$,
since $G^{ab}\cong\mathbb{Z}\oplus{C}$.
Therefore 
\[
\dim_{\mathbb{F}_p}{Hom}(H_2(G;\mathbb{Z}),\mathbb{F}_p)=
\beta_2(G;\mathbb{F}_p)-\beta_1(G;\mathbb{F}_p)+1,
\]
by the Universal Coefficient exact sequences of \S1.
Since $G$ is homologically balanced,
this is at most 1, for all primes $p$, 
and so $H_2(G;\mathbb{Z})$ is cyclic.
\end{proof}

If $G$ is nilpotent then $H_2(\psi)-I$ is a nilpotent endomorphism of 
$H_2(T;\mathbb{Z})$, 
and so $|H_2(G;\mathbb{Z})|=|C|$ if and only if $H_2(T;\mathbb{Z})=0$.

\begin{theorem}
\label{h=1}
Let $G\cong{T}\rtimes_\psi\mathbb{Z}$, 
where $T$ is a finite nilpotent group and $\psi$ is a unipotent automorphism of $T$.
If $G$ is  homologically balanced then
\begin{enumerate}
\item{}$G$ is $2$-generated;
\item{} if the Sylow $p$-subgroup of $T$ is abelian then it is cyclic;
\item{}if $T$ is abelian then 
$G\cong\mathbb{Z}/m\mathbb{Z}\rtimes_n\mathbb{Z}$, 
for some $m,n\not=0$ such that $m$ divides a power of $n-1$.
\end{enumerate}
Conversely, if $T$ is homologically balanced and $G$is $2$-generated then $G$
is homologically balanced.
\end{theorem}

\begin{proof}
Let $p$ be a prime.
Then $\dim_{\mathbb{F}_p}H_1(T;\mathbb{F}_p)=
\dim_{\mathbb{F}_p}Tor(T^{ab},\mathbb{F}_p)$,
since $T$ is finite.
Moreover,
$\psi^{ab}-I$ and $Tor(\psi^{ab},\mathbb{F}_p)-I$ have the same rank.
Since $Tor(T^{ab},\mathbb{F}_p)$ is a natural quotient of 
$H_2(T;\mathbb{F}_p)$,  exactness of the Wang sequence implies
that $\beta_2(G;\mathbb{F}_p)\geq2(\beta_1(G;\mathbb{F}_p)-1)$.
Since $G$ is homologically balanced, $\beta_1(G;\mathbb{F}_p)\leq2$.
Hence the $p$-torsion of $G^{ab}$ is cyclic.
Therefore $G^{ab}\cong\mathbb{Z}\oplus{C}$ for some finite cyclic group.
Since $G$ is nilpotent and $G^{ab}$ is 2-generated, so is $G$.

The Sylow subgroups of $T$ are characteristic,
and $\psi$ restricts to a unipotent automorphism of each such subgroup.
Suppose that the Sylow $p$-subgroup of $T$ is an abelian group $A$.
Since $H^2(A;\mathbb{F}_p)=Hom(H_2(A;\mathbb{F}_p),\mathbb{F}_p)$,
the endomorphisms $H^2(\psi;\mathbb{F}_p)-I$ and 
$H_2(\psi;\mathbb{F}_p)-I$ 
have the same rank.
Hence
\[
\dim(\mathrm{Ker}(H^2(\psi;\mathbb{F}_p)-I)=
\dim(\mathrm{Ker}(H_2(\psi;\mathbb{F}_p)-I)=
\dim(\mathrm{Cok}(H_2(\psi;\mathbb{F}_p)-I)\leq1.
\]
Hence $A$ must be cyclic, by Corollary \ref{cycboth}.

It follows immediately that if $T$ is abelian then it is a direct product 
of cyclic groups of relatively prime orders,
and so is cyclic, of order $m$, say.
Hence $G\cong\mathbb{Z}/m\mathbb{Z}\rtimes_n\mathbb{Z}$, 
for some $n$ such that $(m,n)=1$.
Such a semidirect product is nilpotent if and only if 
$m$ divides some power of $n-1$.

The final assertion follows from consideration of the Wang sequences with coefficients $\mathbb{F}_p$, for $p$ dividing the order of $T$.
\end{proof}

If $G$ is homologically balanced must $T$ also be homologically balanced?

Every semidirect product $\mathbb{Z}/m\mathbb{Z}\rtimes_n\mathbb{Z}$ 
has a balanced presentation
\[
\langle{a,t}\mid{a^m=1},~tat^{-1}=a^n\rangle.
\]
The simplest examples with $T$ non-abelian are the groups 
$Q(8k)\rtimes\mathbb{Z}$,
with the balanced presentations 
$\langle{t,x,y}\mid{x^{2k}=y^2},~tx=xt,~tyt^{-1}=xy\rangle$,
which simplify to
\[
\langle{t,y}\mid[t,y]^{2k}=y^2,~[t,[t,y]]=1\rangle.
\]
Let $m=p^s$ , where $p$ is a prime and $s\geq1$, and let $G$ be the group with presentation
\[
\langle{t,x,y}\mid{txt^{-1}=y},~tyt^{-1}=x^{-1}y^2,~yxy^{-1}=x^{m+1}\rangle.
\]
If we conjugate the final relation with $t$ to get the relation 
$x^{-1}yx=y^{m+1}$ then we see that the torsion subgroup $T$
has presentation $\langle{x,y}\mid{x^m=y^m},~yxy^{-1}=x^{m+1}\rangle$,
and so is one of the metacyclic $p$-groups mentioned at the end of \S2.
Moreover,  $G$ is nilpotent, 
$\zeta{G}=\langle{x^m}\rangle$ and $G'=\langle{x^m,x^{-1}y}\rangle$ is abelian.
Hence $G$ is metabelian.
Each of the groups that we have described here
is 2-generated and its torsion subgroup is homologically balanced. 

\section{metabelian nilpotent groups with hirsch length $>1$}

All known examples of nilpotent groups with balanced presentations 
and Hirsch length $h>1$ are torsion-free. 
We have not yet been able to show that this must be so.
However,  if such a group is also metabelian, but not $\mathbb{Z}^3$,  then 
$h(G)\leq4$ and $\beta_1(G;\mathbb{Q})=2$ \cite[Theorems 7 and 15]{Hi22}.
Our main result implies that there are just three such groups 
with $G/G'\cong\mathbb{Z}^2$.
The argument again involves finding normal subgroups with
``large enough" homology to affect the Betti numbers of the group.
We develop a number of lemmas to this end.

\begin{lemma}
\label{h=2L}
Let $G$ be a finitely generated nilpotent group,
and let $T$ be its torsion subgroup. 
Then the following are equivalent
\begin{enumerate}
\item$\beta_1(G;\mathbb{Q})=2$ and $\beta_2(G;\mathbb{Q})=1$;
\item$h(G)=2$;
\item$G/T\cong\mathbb{Z}^2$.
\end{enumerate}
If these conditions hold and $G$ is homologically balanced then
$H_2(G;\mathbb{Z})\cong\mathbb{Z}\oplus\mathbb{Z}/e\mathbb{Z}$,
for some $e\geq1$.
\end{lemma}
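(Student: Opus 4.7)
My plan is to cycle (3) $\Rightarrow$ (2) $\Rightarrow$ (1) $\Rightarrow$ (3) and then pin down $H_2(G;\mathbb{Z})$ via the Universal Coefficient sequences. Since the torsion subgroup $T$ of a finitely generated nilpotent group is characteristic and finite, Hirsch length is additive across the extension $T\to G\to G/T$; this gives (3) $\Rightarrow$ (2) immediately.

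For (2) $\Rightarrow$ (3), I first observe that any finitely generated torsion-free nilpotent group of Hirsch length $2$ is $\mathbb{Z}^2$: such a group admits a subnormal series with infinite cyclic factors $1<N\cong\mathbb{Z}<H$ with $H/N\cong\mathbb{Z}$; since $H^2(\mathbb{Z};\mathbb{Z})=0$ the extension splits, and the only unipotent action of $\mathbb{Z}$ on $\mathbb{Z}$ is trivial, so $H\cong\mathbb{Z}^2$. Applying this to $G/T$ yields (3). Then (3) $\Rightarrow$ (1) follows because the Lyndon--Hochschild--Serre spectral sequence for $T\to G\to G/T$ with $\mathbb{Q}$-coefficients collapses (as $H_q(T;\mathbb{Q})=0$ for $q>0$), so $H_*(G;\mathbb{Q})\cong H_*(\mathbb{Z}^2;\mathbb{Q})$, giving $\beta_1=2$ and $\beta_2=1$. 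Finally, (1) $\Rightarrow$ (2): the hypothesis $\beta_2(G;\mathbb{Q})<\beta_1(G;\mathbb{Q})$ forces $h(G)\in\{1,2\}$ by Corollary \ref{wang cor}(1), and Lemma \ref{2end} rules out $h(G)=1$ (which would force $\beta_1=1$).

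For the final assertion, assume (3) and that $G$ is homologically balanced. Since $\beta_2(G;\mathbb{Q})=1$, I write $H_2(G;\mathbb{Z})\cong\mathbb{Z}\oplus F$ with $F$ a finite abelian group, and $G^{ab}\cong\mathbb{Z}^2\oplus B$ with $B$ finite (note $G'\subseteq T$ because $G/T$ is abelian). For any prime $p$, the split Universal Coefficient sequences give
\[
\beta_1(G;\mathbb{F}_p)=2+\dim_{\mathbb{F}_p}(B/pB),\qquad \beta_2(G;\mathbb{F}_p)=1+\dim_{\mathbb{F}_p}(F/pF)+\dim_{\mathbb{F}_p}({}_pB),
\]
and for the finite group $B$ one has $\dim_{\mathbb{F}_p}(B/pB)=\dim_{\mathbb{F}_p}({}_pB)$. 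The balanced condition $\beta_2(G;\mathbb{F}_p)\leq\beta_1(G;\mathbb{F}_p)$ then forces $\dim_{\mathbb{F}_p}(F/pF)\leq 1$ for every prime $p$, so every Sylow subgroup of $F$ is cyclic, hence $F$ itself is cyclic, $F\cong\mathbb{Z}/e\mathbb{Z}$.

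The only step with any real content is the identification $G/T\cong\mathbb{Z}^2$ in (2) $\Rightarrow$ (3); but with the vanishing of $H^2(\mathbb{Z};\mathbb{Z})$ killing the extension class and unipotence pinning down the action, this is essentially routine, and no serious obstacle arises.
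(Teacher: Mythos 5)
Your proposal is correct and follows essentially the same route as the paper: the equivalence of (2) and (3) is the ``easy'' step the paper leaves implicit, (1)$\Rightarrow$(2) uses Corollary \ref{wang cor} together with Lemma \ref{2end} exactly as the paper does, and your Universal Coefficient dimension count for the final assertion is precisely the content of Lemma \ref{h=2} (which the paper cites rather than reproves). No gaps; you have simply written out the details the paper compresses into citations.
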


\begin{proof}
If (1) holds then $h(G)\geq2$,  and so $h(G)=2$,  
by the corollary to Lemma \ref{wang app}.
It is easy to see that (2) and (3) are equivalent, and imply (1).
The final assertion follows from Corollary \ref{wang cor} and Lemma \ref{h=2}.
\end{proof}

We could also describe the groups considered in this lemma as
the nilpotent groups which are virtually $\mathbb{Z}^2$.

In this section the Lyndon-Hochschild-Serre spectral sequences for the homology and cohomology of a group which is an extension of $\mathbb{Z}^2$
by a normal subgroup 
shall largely replace the Wang sequences used above.

\begin{lemma}
\label{euler}
Let $F$ be a field and $A$ be a finite dimensional $F[\mathbb{Z}^2]$-module,
and let $b_i=\dim_FH_i(\mathbb{Z}^2;A)$, for $i\geq0$.
Then $b_2=b_0$ and $b_1=b_0+b_2=2b_0$.
\end{lemma}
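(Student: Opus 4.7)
The plan is to compute $H_*(\mathbb{Z}^2;A)$ directly from the Koszul free resolution of the trivial $F[\mathbb{Z}^2]$-module $F$ of length $2$:
\[
0\to F[\mathbb{Z}^2]\xrightarrow{d_2}F[\mathbb{Z}^2]^2\xrightarrow{d_1}F[\mathbb{Z}^2]\xrightarrow{\varepsilon}F\to 0,
\]
with $d_1(e_1)=t_1-1$, $d_1(e_2)=t_2-1$ and $d_2(1)=(t_2-1,\,-(t_1-1))$. Tensoring over $F[\mathbb{Z}^2]$ with $A$ gives a three-term complex of finite-dimensional $F$-vector spaces $A\to A^2\to A$ whose terms have $F$-dimensions $d,\,2d,\,d$, where $d=\dim_FA$, and whose homology computes $b_0,b_1,b_2$.

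First, the alternating-sum identity for a bounded complex of finite-dimensional vector spaces gives
\[
b_0-b_1+b_2=d-2d+d=0,
\]
so $b_1=b_0+b_2$. This half uses nothing about the action and is immediate.

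For $b_0=b_2$, the Koszul boundary formulae identify $H_0=\mathrm{coker}(d_1)$ with the coinvariants $A_{\mathbb{Z}^2}$ and $H_2=\ker(d_2)$ with the invariants $A^{\mathbb{Z}^2}$. I would then invoke Poincar\'e duality for $\mathbb{Z}^2=\pi_1(T^2)$, which is an orientable Poincar\'e duality group of formal dimension $2$: $H_i(\mathbb{Z}^2;A)\cong H^{2-i}(\mathbb{Z}^2;A)$ for any $F[\mathbb{Z}^2]$-module $A$. Pairing this with the equality $\dim_F H_i=\dim_F H^i$ (the Koszul chain complex $P_*\otimes_{F[\mathbb{Z}^2]}A$ and cochain complex $\mathrm{Hom}_{F[\mathbb{Z}^2]}(P_*,A)$ have identical $F$-dimensions term-by-term, and the Euler characteristic analysis runs symmetrically on each side) yields $b_0=\dim H^2=\dim H_2=b_2$. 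Substituting then gives $b_1=b_0+b_2=2b_0$.

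The main obstacle I anticipate is this last invariants-versus-coinvariants dimension comparison: the Euler characteristic step is routine, and Poincar\'e duality for $\mathbb{Z}^2$ is standard, but one has to be careful to match up the dual identifications cleanly so that the individual dimensions (and not just their alternating sums) agree. An alternative to the duality argument would be to verify directly that the ranks of $d_1$ and $d_2$ on the Koszul complex tensored with $A$ coincide, exploiting the anti-symmetry built into the definition of $d_2$.
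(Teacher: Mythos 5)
The Euler--characteristic half of your argument is fine: the Koszul complex $A\to A^2\to A$ has Euler characteristic $0$, so $b_1=b_0+b_2$. The gap is in the step $b_0=b_2$. Poincar\'e duality for $\mathbb{Z}^2$ gives $H_i(\mathbb{Z}^2;A)\cong H^{2-i}(\mathbb{Z}^2;A)$, but this only identifies $H_2\cong H^0$ (both are the invariants $A^{\mathbb{Z}^2}$) and $H_0\cong H^2$ (both are the coinvariants $A_{\mathbb{Z}^2}$), so it yields no new information; to conclude you would also need $\dim_FH_i=\dim_FH^i$, and your justification for that --- that the chain and cochain complexes have the same term-by-term dimensions --- controls only the alternating sum of the homology dimensions, not the individual ones. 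Your proposed alternative, that the ranks of $d_1$ and $d_2$ coincide by the antisymmetry of the Koszul differential, is essentially the paper's own argument, and it has the same defect: the block transpose of $\left(\begin{smallmatrix}X\\Y\end{smallmatrix}\right)$ is $(X^{T},Y^{T})$, not $(Y,-X)$, and when $X,Y$ are commuting but non-symmetric matrices these need not have the same rank.

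In fact no argument can close this gap, because $b_0=b_2$ is false as stated. Take $A=F[x,y]/(x,y)^2$, a $3$-dimensional space with basis $\{1,x,y\}$, and let the two generators of $\mathbb{Z}^2$ act by multiplication by $t_1=1+x$ and $t_2=1+y$; these are commuting unipotent automorphisms of $A$. Then $A^{\mathbb{Z}^2}=\mathrm{Ker}(x)\cap\mathrm{Ker}(y)=\langle x,y\rangle$ has dimension $2$, while $A_{\mathbb{Z}^2}=A/(xA+yA)=\langle 1\rangle$ has dimension $1$, so $b_2=2$, $b_0=1$ and $b_1=3$. (One can confirm this by computing $Tor_*^{F[x,y]}(F,A)$ from the minimal free resolution $0\to R^2\to R^3\to R\to A\to 0$ of $A$, whose differentials die on tensoring with $F$; the module is supported at $(t_1-1,t_2-1)$, so the computation over the Laurent ring is the same.) Thus only $b_1=b_0+b_2$ survives for a general finite-dimensional $F[\mathbb{Z}^2]$-module, and both your proof and the paper's founder on the same illusory symmetry between invariants and coinvariants.
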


\begin{proof}
We may compute $H_i(\mathbb{Z}^2;A)=Tor_i^{F[\mathbb{Z}^2]}(F,A)$ 
from the complex
\[
0\to{A}\to{A^2}\to{A}\to0,
\]
in which the differentials are $\partial^1=
\left[\smallmatrix(x-1)id_A\\(y-1)id_A\endsmallmatrix\right]$,
and $\partial^2=\left[\smallmatrix(y-1)id_A,(1-x)id_A\endsmallmatrix\right]$
where $\{x,y\}$ is a basis for $\mathbb{Z}^2$.
Since the matrix for $\partial^2$ is the transpose of that for
$\partial^1$ (up to a change of sign in the second block),
they have the same rank.
Hence 
$b_2=\dim_F\mathrm{Ker}(\partial_2)=\dim_F\mathrm{Cok}(\partial_1)=b_0$.
The final assertion follows since $b_0-b_1+b_2=1-2+1=0$ is the Euler characteristic of the complex.
\end{proof}

The  modules $H_2(\mathbb{Z}^2;A)$ and $H_0(\mathbb{Z}^2;A)$ 
are the submodule of fixed points and the coinvariant quotient modules 
of the $\mathbb{Z}^2$-action, respectively.
Minor adjustments give similar results for $\dim_FH^j(\mathbb{Z}^2;A)$.
(We may also use Poincar\'e duality for $\mathbb{Z}^2$
to relate homology and cohomology.)

Recall that if $K$ is a normal subgroup of a group $G$ then conjugation in $G$ induces a natural action of $G/K$ on the homology and cohomology of $K$.

\begin{lemma}
\label{metab1}
Let $G$ be a finitely generated nilpotent group 
with a normal subgroup $K$ such that $G/K\cong\mathbb{Z}^2$,
and suppose that $\beta_2(G;F)\leq\beta_1(G;F)$ for some field $F$.
Then $\dim_FH^2(K;F)^{G/K}\leq1$.
\end{lemma}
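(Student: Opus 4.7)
The plan is to apply the Lyndon--Hochschild--Serre cohomological spectral sequence to the extension $1\to{K}\to{G}\to\mathbb{Z}^2\to1$. Since $K$ is a subgroup of a finitely generated nilpotent group it is itself finitely generated nilpotent, hence finitely presentable, so $H^q(K;F)$ is a finite-dimensional $F[\mathbb{Z}^2]$-module for $q\leq2$. The spectral sequence $E_2^{p,q}=H^p(\mathbb{Z}^2;H^q(K;F))\Rightarrow{H^{p+q}(G;F)}$ has only the columns $p=0,1,2$ non-zero, since $\mathbb{Z}^2$ has cohomological dimension $2$, and so $E_3=E_\infty$.

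Set $a=\dim_FH^2(K;F)^{G/K}=\dim E_2^{0,2}$ and $b=\dim_FH^1(K;F)^{G/K}=\dim E_2^{0,1}$. The cohomological analogue of Lemma \ref{euler} (the same Euler-characteristic-and-transpose argument, applied now to $\mathrm{Hom}_{F[\mathbb{Z}^2]}(P_*,A)$ for the Koszul-type resolution $P_*$ of the trivial module $F$) gives, for every finite-dimensional $F[\mathbb{Z}^2]$-module $A$, the identities $\dim H^0(\mathbb{Z}^2;A)=\dim H^2(\mathbb{Z}^2;A)$ and $\dim H^1(\mathbb{Z}^2;A)=2\dim H^0(\mathbb{Z}^2;A)$. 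Applied row by row this determines every $E_2^{p,q}$ in total degree $\leq2$; in particular $\dim E_2^{1,0}=2$, $\dim E_2^{2,0}=1$, $\dim E_2^{1,1}=2b$, and $\dim E_2^{2,1}=b$.

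Only two $d_2$ differentials are relevant, namely $d_2^{0,1}\colon E_2^{0,1}\to E_2^{2,0}$ of some rank $r_1$ and $d_2^{0,2}\colon E_2^{0,2}\to E_2^{2,1}$ of some rank $r_2$. Reading off the $E_\infty$ filtrations of $H^i(G;F)$ one obtains
\[
\beta_1(G;F)=2+(b-r_1),\qquad \beta_2(G;F)=(1-r_1)+2b+(a-r_2).
\]
Subtracting and imposing the hypothesis $\beta_2(G;F)\leq\beta_1(G;F)$ yields $a+b\leq1+r_2$. Since $r_2\leq\dim E_2^{2,1}=b$, this forces $a\leq1$, which is the required bound.

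The only point requiring care is the cohomological version of Lemma \ref{euler}; this is essentially the same computation (the cochain complex computing $H^*(\mathbb{Z}^2;A)$ has differentials transpose to those used for homology, so again has matching ranks), and is the ``minor adjustment'' alluded to immediately after that lemma. Everything else is bookkeeping with the first three columns of the spectral sequence, and, notably, the argument does not need the $\mathbb{Z}^2$-action on $H^*(K;F)$ to be unipotent.
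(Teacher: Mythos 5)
Your proof is correct, and it uses the same central tools as the paper (the Lyndon--Hochschild--Serre cohomology spectral sequence for $1\to K\to G\to\mathbb{Z}^2\to1$ together with the cohomological version of Lemma \ref{euler}), but your bookkeeping is organized differently and is in fact cleaner. The paper extracts two exact sequences from the spectral sequence, notes $\beta_1(G;F)\leq3$ via Lubotzky's theorem, and then argues separately in the cases $\beta_2(G;F)=2$ and $\beta_2(G;F)=3$; in the first case it needs $b_0=\dim_F H^1(K;F)^{G/K}>0$, which rests on the unipotence of the $G/K$-action coming from nilpotence (and on disposing of the case $H^1(K;F)=0$ separately). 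You instead compute the full $E_\infty$ page in total degrees $1$ and $2$ and obtain the closed-form identity $\beta_2(G;F)-\beta_1(G;F)=a+b-r_2-1$, whence $\beta_2\leq\beta_1$ forces $\max(a,b)\leq1$ since $r_2\leq\min(a,b)$. This removes the case analysis, the appeal to Lubotzky, and (as you note) any use of unipotence; the only inputs are finite-dimensionality of $H^q(K;F)$ for $q\leq2$ and the rank/Euler-characteristic identities for $F[\mathbb{Z}^2]$-modules, which the paper itself licenses in the remark following Lemma \ref{euler}. As a bonus your argument also yields $\dim_F H^1(K;F)^{G/K}\leq1$, which the paper does not state here.
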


\begin{proof}
We note first that $\beta_1(G;F)=2$ or 3,
since $\beta_2(G;F)\leq\beta_1(G;F)$ 
\cite[Theorem 2.7]{Lu83}.
We may assume that  $A=H^1(K;F)\not=0$,
since $G$ is nilpotent. 
Let $N=G/K$ and $b_i=\dim_FH^i(N;A)$.
The LHS spectral sequence for cohomology with coefficients $F$
for $G$ as an extension of $N$ by $K$ 
gives two exact sequences
\begin{equation*}
\begin{CD}
0\to{H^1(N;F)}\to{H^1(G;F)}\to{A}^N@>d^{0,1}_2>>{H^2(N;F)}\to
{H^2(G;F)}\to{J}\to0
\end{CD}
\end{equation*}
and
\begin{equation*}
\begin{CD}
0\to{H^1(N;A)}\to{J}\to{H^2(K;F)^N}@>d^{0,2}_2>>{H^2(N;A)}.
\end{CD}
\end{equation*}
The first sequence gives $\dim_FJ\leq\beta_2(G;F)$.
Then  $b_1=b_0+b_2=2b_0$, by Lemma \ref{euler}, and $b_0>0$,
since $A\not=0$.
Hence $b_0-b_1<0$, and so the second sequence gives   
$\dim_FH^2(K;F)^N\leq
\beta_2(G;F)+b_0-b_1\leq\beta_2(G;F)-1$.
In particular, 
$\dim_FH^2(K;F)^N\leq1$ if $\beta_2(G;F)=2$.

If $\beta_2(G;F)=3$ then
$\beta_1(G;F)=3$ also, by Corollary \ref{wang cor}
and so $\dim_F\mathrm{Ker}(d^{0,1}_2)=1$.
If $d^{0,1}_2\not=0$ then $b_0=2$ and so $b_1=4$,
by Lemma \ref{euler}.
But then $\beta_2(G;F)\geq4$.
Therefore $d^{0,1}_2=0$, and so $b_0=1$.
Hence $b_1=2$ and $b_2=1$,
and $d^{0,2}_2$ is a monomorphism.
Hence we again have $\dim_FH^2(K;F)^N\leq1$.
\end{proof}

In particular, $H^2(K;F)$ is not canonically subsplit.

A parallel argument using the LHS spectral sequence for homology shows that
$\dim_FH_0(G/K;H_2(K;F))\leq1$.

\begin{lemma}
\label{K 2end}
Let $P$ be a non-trivial finite $p$-group and $K\cong\mathbb{Z}\times{P}$.
Then $H^2(K;\mathbb{F}_p)$ is canonically subsplit.
\end{lemma}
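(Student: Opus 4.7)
The plan is to exhibit two non-trivial $\mathrm{Aut}(K)$-invariant submodules of $H^2(K;\mathbb{F}_p)$ whose intersection is zero, using the fact that $P$ is characteristic (as the torsion subgroup of $K$) together with the naturality of standard constructions. I would take $V_1=\mathrm{Ker}\bigl(H^2(K;\mathbb{F}_p)\to H^2(P;\mathbb{F}_p)\bigr)$, the kernel of restriction along $P\hookrightarrow K$, and $V_2=\mathrm{Im}\bigl(\rho\colon H^2(K;\mathbb{Z})\to H^2(K;\mathbb{F}_p)\bigr)$, the image of reduction mod $p$. Naturality of restriction and of $\rho$ makes both submodules invariant under any subgroup of $\mathrm{Aut}(K)$, so they qualify as candidates provided I can verify non-triviality and disjointness.

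For $V_1\neq 0$, the Lyndon-Hochschild-Serre spectral sequence for $1\to P\to K\to \mathbb{Z}\to 1$ with $\mathbb{F}_p$ coefficients has trivial $\mathbb{Z}$-action on $H^*(P;\mathbb{F}_p)$ (since $K$ is a direct product) and only two non-zero columns (since $H^i(\mathbb{Z};-)=0$ for $i\geq 2$), so it collapses to a short exact sequence
\[
0\to H^1(P;\mathbb{F}_p)\to H^2(K;\mathbb{F}_p)\to H^2(P;\mathbb{F}_p)\to 0
\]
in which the right-hand map is the restriction. This identifies $V_1$ with $H^1(P;\mathbb{F}_p)=\mathrm{Hom}(P^{ab},\mathbb{F}_p)$, which is non-zero since $P^{ab}$ is a non-trivial $p$-group.

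For $V_2\neq 0$ and $V_1\cap V_2=0$ I would run the same spectral sequence integrally. The key input is $H^1(P;\mathbb{Z})=\mathrm{Hom}(P,\mathbb{Z})=0$, which forces the integral restriction $H^2(K;\mathbb{Z})\to H^2(P;\mathbb{Z})$ to be an isomorphism. The Universal Coefficient Theorem for $P$ then gives the injection $H^2(P;\mathbb{Z})\otimes \mathbb{F}_p\hookrightarrow H^2(P;\mathbb{F}_p)$, and naturality of $\rho$ forces restriction to be injective on $V_2$, so $V_1\cap V_2=0$. A further application of UC identifies $H^2(P;\mathbb{Z})\cong \mathrm{Ext}(P^{ab},\mathbb{Z})\cong P^{ab}$, so $V_2\cong P^{ab}\otimes \mathbb{F}_p\neq 0$.

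The delicate step is the disjointness $V_1\cap V_2=0$; once that is established, the rest is naturality plus two standard applications of the Universal Coefficient Theorem. All the leverage comes from the vanishing $H^1(P;\mathbb{Z})=0$, which promotes the integral restriction from an edge map to a full isomorphism and so allows the argument to be lifted from $\mathbb{F}_p$- to $\mathbb{Z}$-coefficients.
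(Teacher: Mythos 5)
Your proof is correct. The paper gets the same conclusion from the Universal Coefficient sequence together with cup products: its two invariant summands are $Ext(K^{ab},\mathbb{F}_p)$ (the $Ext$ term of the UCT, which restricts isomorphically to $Ext(P^{ab},\mathbb{F}_p)$) and $\eta\cup H^1(K;\mathbb{F}_p)$, where $\eta$ is the class of the canonical projection $K\to K/P\cong\mathbb{Z}$; non-triviality of the cup-product summand comes from the K\"unneth Theorem, and disjointness from the fact that $\eta$ restricts to zero on $P$. Your two submodules in fact coincide with these: $\mathrm{Ker}\bigl(H^2(K;\mathbb{F}_p)\to H^2(P;\mathbb{F}_p)\bigr)$ is exactly $\eta\cup H^1(K;\mathbb{F}_p)$ by the K\"unneth decomposition, and the image of mod-$p$ reduction is exactly $Ext(K^{ab},\mathbb{F}_p)$, since $H^2(K;\mathbb{Z})\cong Ext(K^{ab},\mathbb{Z})$ surjects onto it. What differs is the machinery: you replace the UCT-plus-cup-product bookkeeping with the Lyndon--Hochschild--Serre spectral sequence (run both mod $p$ and integrally) and the Bockstein coefficient sequence, and you get $\mathrm{Aut}(K)$-invariance for free from $P$ being characteristic rather than from naturality of the UCT splitting. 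Your route is slightly longer but makes the disjointness argument very transparent (restriction is injective on the image of integral reduction because the integral restriction is an isomorphism, which hinges on $Hom(P,\mathbb{Z})=0$); the paper's is shorter and stays within the UCT framework it uses throughout. Both are complete.
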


\begin{proof}
We shall use the Universal Coefficient exact sequence for cohomology given in \S1.
The projection of $K$ onto $K/P\cong\mathbb{Z}$ determines 
a class $\eta\in{H^1(K;\mathbb{F}_p)}=Hom(K,\mathbb{F}_p)$ (up to sign),
and cup product with $\eta$ maps $H^1(K;\mathbb{F}_p)$ non-trivially to 
$H^2(K;\mathbb{F}_p)$, by the K\"unneth Theorem.
The restriction from $Ext(K^{ab},\mathbb{F}_p)$ to 
$Ext(P^{ab},\mathbb{F}_p)$ is an isomorphism, 
and so  $Ext(K^{ab},\mathbb{F}_p)$ and 
$\eta\cup{H^1(K;\mathbb{F}_p)}$ have trivial intersection.
Hence 
$Ext(K^{ab},\mathbb{F}_p)\oplus(\eta\cup{H^1(K;\mathbb{F}_p)})$ 
is a subspace of $H^2(K;\mathbb{F}_p)$,
and the summands  are invariant under 
the action of automorphisms of $K$,
by the naturality of the Universal Coefficient Theorem.
The summands are non-trivial, since $P\not=1$.
\end{proof}

The next four lemmas (leading up to Theorem \ref{Z^2}) consider 
nilpotent groups which are extensions of $\mathbb{Z}^2$ 
by finite normal subgroups.

\begin{lemma}
\label{P factor}
Let $G$ be a finitely generated nilpotent group,
and let $T$ be its torsion subgroup.
Let $P$ be a non-trivial Sylow $p$-subgroup of $T$ and let 
$\gamma_p:G\to{Out}(P)$
be the homomorphism determined by conjugation in $G$.
If $G/T\cong\mathbb{Z}^2$ and the image of $\gamma_p$ is cyclic then 
$\beta_2(G;\mathbb{F}_p)>\beta_1(G;\mathbb{F}_p)$.
\end{lemma}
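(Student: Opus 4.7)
The plan is to produce a normal subgroup $K\triangleleft{G}$ isomorphic to $P\times\mathbb{Z}$ with $G/K\cong\mathbb{Z}$, so that Lemma \ref{K 2end} and the cohomology Wang sequence of Lemma \ref{wang app}(1) together force $\beta_2(G;\mathbb{F}_p)>\beta_1(G;\mathbb{F}_p)$.

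The first step is to reduce to the case $T=P$. Writing $T=P\times{Q}$ with $Q$ the Hall $p'$-subgroup, $Q$ is characteristic in $T$ and hence normal in $G$. Since $H^i(Q;\mathbb{F}_p)=0$ for $i>0$, the Lyndon--Hochschild--Serre spectral sequence for $G$ as an extension of $G/Q$ by $Q$ collapses to give $H^i(G;\mathbb{F}_p)\cong{H^i(G/Q;\mathbb{F}_p)}$ for all $i$. Moreover $Q$ centralizes $P$, so $\gamma_p$ factors through $G/Q$ with the same image; thus we may replace $G$ by $G/Q$ and assume $T=P$.

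Next I would construct $K$. Let $N=\mathrm{Ker}(\gamma_p)$, so $P\leq{N}$ and $G/N$ is cyclic. Since the kernel of any cyclic quotient of $\mathbb{Z}^2$ contains a primitive vector, we can choose $u_0\in{N}$ whose image in $G/P\cong\mathbb{Z}^2$ generates a direct summand. Because $u_0$ acts on $P$ by an inner automorphism, there is $p_0\in{P}$ with $u_0pu_0^{-1}=p_0pp_0^{-1}$ for all $p\in{P}$, and $u=p_0^{-1}u_0$ then centralizes $P$ while having the same image in $G/P$. Set $K=P\langle{u}\rangle$. The infinite order of $\bar{u}$ in $G/P$ forces $\langle{u}\rangle\cap{P}=1$, and since $u$ commutes with $P$ we get $K\cong{P}\times\mathbb{Z}$. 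For any $g\in{G}$ the image of $gug^{-1}$ in the abelian group $G/P$ equals $\bar{u}$, so $gug^{-1}\in{uP}\subseteq{K}$; combined with $gPg^{-1}=P$ this gives $K\triangleleft{G}$, and $G/K\cong\mathbb{Z}$ because $K/P$ is a direct summand of $G/P$.

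The conclusion then assembles quickly. Lemma \ref{K 2end} gives that $H^2(K;\mathbb{F}_p)$ is canonically subsplit. Because $G$ is nilpotent, conjugation makes $G/K$ act on $K$ by a unipotent automorphism (Lemma \ref{uninil}), and the induced action on $H^2(K;\mathbb{F}_p)$ is therefore unipotent (Lemma \ref{unipotent}). The canonical summands are invariant under this action, and Lemma \ref{filter} provides a nonzero fixed vector in each, so $\dim_{\mathbb{F}_p}H^2(K;\mathbb{F}_p)^{G/K}>1$. Applying Lemma \ref{wang app}(1) to $G\cong{K}\rtimes\mathbb{Z}$ then yields $\beta_2(G;\mathbb{F}_p)-\beta_1(G;\mathbb{F}_p)+1=\dim_{\mathbb{F}_p}H^2(K;\mathbb{F}_p)^{G/K}>1$, which is the desired strict inequality. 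The main obstacle is the normality of $K$: if $Q\neq1$ then $gug^{-1}$ lies only in $uT=u(P\times{Q})$, and the $Q$-coordinate generically prevents $P\langle{u}\rangle$ from being normal, so the initial reduction to $T=P$ via $G/Q$ is what makes the construction go through.
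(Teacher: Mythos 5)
Your proposal is correct and follows essentially the same route as the paper: reduce modulo the Hall $p'$-subgroup so that $T=P$, use the cyclicity of $\mathrm{Im}(\gamma_p)$ to locate a normal subgroup $K\cong{P}\times\mathbb{Z}$ with $G/K\cong\mathbb{Z}$, and then combine Lemma \ref{K 2end}, unipotence of the induced action, and the Wang sequence of Lemma \ref{wang app}. The only difference is one of detail: the paper takes $K=\mathrm{Ker}(f)$ for an epimorphism $f:G\to\mathbb{Z}$ through which $\gamma_p$ factors and simply asserts $K\cong\mathbb{Z}\times{P}$, whereas you make the splitting explicit by adjusting a lift by an inner automorphism.
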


\begin{proof}
We may write $G\cong{K}\rtimes_\psi\mathbb{Z}$, 
where $\psi$ is a unipotent automorphism of $K$,
and $K$ is in turn an extension of $\mathbb{Z}$ by $T$.
Let $P$ be the Sylow $p$-subgroup of $T$,
and let $N$ be the product of the other Sylow subgroups of $T$.
Since the Sylow subgroups of $T$ are characteristic, 
conjugation in $G$ determines a homomorphism $\gamma_p:G\to{Out}(P)$.
Moreover, $N$ is normal in $G$,  and the projection of $G$ onto $G/N$ 
induces isomorphisms on homology and cohomology 
with coefficients $\mathbb{F}_p$.
Hence we may assume that $N=1$ and so $T=P$ is a non-trivial $p$-group.

If the image of $\gamma_P$ is cyclic then $\gamma_P$ 
factors through an epimorphism $f:G\to\mathbb{Z}$, 
with kernel $K\cong\mathbb{Z}\times{P}$.
Since $H^2(K;\mathbb{F}_p)$ has a subspace which is the direct sum of non-trivial canonical summands, by Lemma \ref{K 2end},
 $\dim_{\mathbb{F}_p}\mathrm{Ker}(H^2(\psi;\mathbb{F}_p)-I)>1$
(as in Lemma \ref{cycad}).
The result now follows from Lemma \ref{wang app}.
\end{proof}

Thus the group with presentation 
$\langle{x,y}\mid[x,[x,y]]=[y,[x,y]]=[x,y]^p=1\rangle$
mentioned near the end of \S1 above does not have a balanced presentation.
Similarly, no nilpotent extension of $\mathbb{Z}^2$ by $Q(8)$ can have a balanced presentation,
since the abelian subgroups of $Out(Q(8))\cong\mathcal{S}_3$ are cyclic.

If $p$ is an odd prime and $C$ is a cyclic $p$-group then 
$\mathrm{Aut}(C)$ is cyclic, and so Lemma \ref{P factor} may apply.
However, dealing with 2-torsion again requires more effort.

\begin{lemma}
\label{partial3}
Let $G$ be a finitely generated nilpotent group,
and let $T$ be its torsion subgroup.
If $G/T\cong\mathbb{Z}^2$ and the Sylow $2$-subgroup of $T$
is a nontrivial cyclic group then 
$\beta_2(G;\mathbb{F}_2)>\beta_1(G;\mathbb{F}_2)$. 
\end{lemma}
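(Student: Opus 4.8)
The plan is to reduce, exactly as in the proof of Lemma~\ref{P factor}, to the case where the torsion subgroup $T$ equals its Sylow $2$-subgroup $C$, a nontrivial cyclic $2$-group, so that $G$ is an extension $1\to C\to G\to\mathbb{Z}^2\to1$ in which each generator of $\mathbb{Z}^2$ acts on $C=\mathbb{Z}/2^k\mathbb{Z}$ by multiplication by a unit congruent to $1$ modulo $2$. Passing from $G$ to the quotient by the product of the odd Sylow subgroups of $T$ does not change homology with coefficients $\mathbb{F}_2$, so this reduction is harmless. Since the image of $C$ in $G^{ab}$ is a nontrivial cyclic $2$-group, we have $G^{ab}\cong\mathbb{Z}^2\oplus\mathbb{Z}/2^m\mathbb{Z}$ with $m\geq1$, and hence $\beta_1(G;\mathbb{F}_2)=3$. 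It therefore suffices to show that $\beta_2(G;\mathbb{F}_2)=4$.

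First I would identify the torsion of $H_2(G;\mathbb{Z})$ using the Lyndon--Hochschild--Serre spectral sequence for homology, $E^2_{p,q}=H_p(\mathbb{Z}^2;H_q(C;\mathbb{Z}))$. As $C$ is cyclic, $H_q(C;\mathbb{Z})=0$ for every even $q>0$, so on the line $p+q=2$ the only possibly nonzero terms are $E^2_{2,0}=H_2(\mathbb{Z}^2;\mathbb{Z})\cong\mathbb{Z}$ and $E^2_{1,1}=H_1(\mathbb{Z}^2;C)$, while $E^2_{0,2}=0$. The term $E^2_{1,1}$ has no nonzero neighbours (both $E^2_{3,0}$ and $E^2_{-1,2}$ vanish), so it is a permanent cycle and $E^\infty_{1,1}=H_1(\mathbb{Z}^2;C)$. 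Since $\beta_2(G;\mathbb{Q})=1$ by Lemma~\ref{h=2L}, the subquotient $E^\infty_{2,0}$ carries the free part, and the filtration of $H_2(G;\mathbb{Z})$ splits off this free summand; thus the torsion subgroup of $H_2(G;\mathbb{Z})$ is exactly $H_1(\mathbb{Z}^2;C)$, independently of the differential $d^2_{2,0}$.

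The heart of the argument is then a computation of $H_1(\mathbb{Z}^2;C)$ from the Koszul complex $0\to C\to C^2\to C\to0$ of the proof of Lemma~\ref{euler}, now with $A=C$ over $\mathbb{Z}$. Writing the two actions as multiplication by $1+\alpha$ and $1+\beta$, where $\alpha,\beta\in2\mathbb{Z}/2^k\mathbb{Z}$ because the action is unipotent, a direct analysis of the syzygies of $(\alpha,\beta)$ in $\mathbb{Z}/2^k\mathbb{Z}$ gives $H_1(\mathbb{Z}^2;C)\cong(\mathbb{Z}/2^\mu\mathbb{Z})^2$, where $\mu=\min\{v_2(\alpha),v_2(\beta),k\}\geq1$ and $v_2$ is the $2$-adic valuation. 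The essential point is only that this group is \emph{not} cyclic, so that $\dim_{\mathbb{F}_2}\mathbb{F}_2\otimes H_1(\mathbb{Z}^2;C)=2$. Granting this, the homology Universal Coefficient sequence of \S1 has summands of dimensions
\[
\dim_{\mathbb{F}_2}\mathbb{F}_2\otimes H_2(G;\mathbb{Z})=1+2=3,\qquad \dim_{\mathbb{F}_2}Tor(\mathbb{F}_2,G^{ab})=1,
\]
whence $\beta_2(G;\mathbb{F}_2)=4>3=\beta_1(G;\mathbb{F}_2)$.

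I expect the non-cyclicity of $H_1(\mathbb{Z}^2;C)$ to be the main obstacle, and it is precisely here that the difficulty flagged for $2$-torsion appears. Because $C$ is cyclic, $H^2(C;\mathbb{F}_2)$ is one dimensional and so is not canonically subsplit, and the arguments of Lemmas~\ref{P factor} and \ref{cycad} cannot be applied with $K=C$. The required subspace must instead be produced one level up, as the failure of $\alpha$ and $\beta$ to form a regular sequence in $\mathbb{Z}/2^k\mathbb{Z}$; the unipotence hypothesis, namely that $\alpha$ and $\beta$ are even, is exactly what forces $\mu\geq1$ and hence makes both cyclic summands of $H_1(\mathbb{Z}^2;C)$ nontrivial.
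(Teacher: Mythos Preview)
Your approach is quite different from the paper's and, modulo one gap, it works. The paper first uses Lemma~\ref{P factor} to reduce to the case where the conjugation action of $G/T$ on the cyclic $2$-group $C$ does not factor through a cyclic group (forcing $|C|\geq8$), then writes down an explicit four-relator presentation for $G$, builds the associated Fox--Lyndon partial resolution of $\mathbb{Z}$ over $\mathbb{Z}[G]$, and shows by a bare-hands computation in the noncommutative group ring that every element of $\mathrm{Ker}(\partial_2)$ has all four coefficients of even augmentation, so that $\mathbb{F}_2\otimes_{\mathbb{Z}[G]}\partial_3=0$ and $\beta_2(G;\mathbb{F}_2)=\beta_1(G;\mathbb{F}_2)+1$. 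Your spectral-sequence and Koszul computation is shorter, more structural, and needs no preliminary reduction via Lemma~\ref{P factor}.

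The gap is your assertion that the image of $C$ in $G^{ab}$ is nontrivial, on which both $\beta_1=3$ and your value of $Tor(\mathbb{F}_2,G^{ab})$ rest. This is exactly what the paper warns can fail for $h(G)>1$ (see the remark after Lemma~\ref{wang app}); concretely, in the presentation $\langle x,y,z\mid[x,y]=z^f,\ z^k=1,\ xzx^{-1}=z^{-1},\ yzy^{-1}=z^\ell\rangle$ used in the paper, the case $f=1$ gives $C\leq G'$ and $\beta_1(G;\mathbb{F}_2)=2$. In your language, the transgression $d^2_{2,0}\colon\mathbb{Z}\to H_0(\mathbb{Z}^2;C)$ encodes the extension class and may be surjective. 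Fortunately the rest of your argument is unaffected: since the target of $d^2_{2,0}$ is finite, $E^\infty_{2,0}\cong\mathbb{Z}$ regardless, and your identification of the torsion of $H_2(G;\mathbb{Z})$ with $H_1(\mathbb{Z}^2;C)\cong(\mathbb{Z}/2^\mu\mathbb{Z})^2$ stands. Writing $D$ for the (possibly trivial) image of $C$ in $G^{ab}$, the Universal Coefficient sequence gives
\[
\beta_2(G;\mathbb{F}_2)=3+\dim_{\mathbb{F}_2}Tor(\mathbb{F}_2,D),
\qquad
\beta_1(G;\mathbb{F}_2)=2+\dim_{\mathbb{F}_2}D/2D,
\]
and since $D$ is a finite cyclic $2$-group these two corrections are equal (both $0$ or both $1$). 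Hence $\beta_2(G;\mathbb{F}_2)=\beta_1(G;\mathbb{F}_2)+1$ in every case, and your argument goes through once this small case distinction is made.
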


\begin{proof}
We may factor out the maximal odd-order subgroup of $T$ without changing
the $\mathbb{F}_2$-homology, 
and so we may assume that $T\cong\mathbb{Z}/k\mathbb{Z}$,
where $k=2^n$,
for some $n\geq1$.
We may also assume that the action of $G$ on $T$ 
by conjugation does not factor through a cyclic group, 
by Lemma \ref{P factor}, and so $k\geq8$.
Let $U$ be the subgroup of $(\mathbb{Z}/k\mathbb{Z})^\times$
represented by integers $\equiv1~mod~(4)$.
Then $Aut(\mathbb{Z}/k\mathbb{Z})\cong\{\pm1\}\times{U}$.
It is easily verified that noncyclic subgroups of $Aut(\mathbb{Z}/k\mathbb{Z})$
have $\{\pm1\}$ as a direct factor, and so $G$ has a presentation
\[
\langle{x,y,z}\mid[x,y]=z^f,~z^k=1,~xzx^{-1}=z^{-1},~yzy^{-1}=z^\ell\rangle,
\]
where $f$ divides $k$, 
$1<\ell<k$ and $\ell\equiv1~mod~(4)$.
Let $m$ be a mutiplicative inverse for $\ell~mod~(k)$,
so that $1<m<k$ and $m\ell=wk+1$ for some $w\in\mathbb{Z}$.
Note that $\beta_1(G;\mathbb{F}_2)=2$ if $f=1$ and is 3 if $f>1$.

The ring $\mathbb{Z}[G]$ is a twisted polynomial extension 
of $\mathbb{Z}[\mathbb{Z}/k\mathbb{Z}]=\mathbb{Z}[z]/(z^k-1)$, 
and so is  noetherian.
We may assume each monomial is normalized in alphabetical order: 
$x^hy^iz^j$, for exponents $h,i\in\mathbb{Z}$ and $0\leq{j}<k$.
Let $\nu=\Sigma_{i=0}^{k-1}z^i$ be the norm element for 
$\mathbb{Z}[\mathbb{Z}/k\mathbb{Z}]$.
Then $z\nu=\nu$, so $\nu^2=k\nu$, and $\nu$ is central in $\mathbb{Z}[G]$.
We shall use the fact that if $\gamma,\delta\in\mathbb{Z}[G]$
are such that $\gamma\nu=0$ and $\delta(z-1)=0$
then $\gamma=\gamma'(z-1)$ and $\delta=\delta'\nu$, 
for some $\gamma',\delta'\in\mathbb{Z}[G]$.
On the other hand, non-zero terms not involving $z$
are not zero-divisors in $\mathbb{Z}[G]$.

The augmentation module $\mathbb{Z}$ has a Fox-Lyndon partial resolution
\begin{equation*}
\begin{CD}
C_2@>\partial_2>>{C_1}@>\partial_1>>C_0=\mathbb{Z}[G]@>\varepsilon>>\mathbb{Z}\to0,
\end{CD}
\end{equation*}
where $\varepsilon:\mathbb{Z}[G]\to\mathbb{Z}$ is the augmentation
homomorphism,
$C_1\cong\mathbb{Z}[G]^3$ has basis $\{e_x,e_y,e_z\}$
corresponding to the generators and 
 $C_2\cong\mathbb{Z}[G]^4$ has basis $\{r,s,t,u\}$
corresponding to the relators $r=z^fyxy^{-1}x^{-1}$, 
$s=z^k$, $t=zxzx^{-1}$ and $u=z^\ell{y}z^{-1}y^{-1}$.
The differentials are given by
\[
\partial_1(e_x)=x-1, \quad \partial_1(e_y)=y-1\quad\mathrm{ and}\quad 
\partial(e_z)=z-1;\quad\mathrm{ and}
\]
\[
\partial_2(r)=(z^fy-1)e_x+(z^f-x)e_y+(\Sigma_{i=0}^{f-1}z^i )e_z,\quad
\partial_2(s)=\nu{e_z},
\]
\[
\partial_2(t)=(z-1)e_x+(1+zx)e_z\quad\mathrm{and}\quad
\partial_2(u)=(z^\ell-1)e_y+(\Sigma_{j=0}^{\ell-1}z^j-y)e_z.
\]
We may choose a homomorphism $\partial_3:C_3\to{C_2}$ with domain 
$C_3$ a  free $\mathbb{Z}[G]$-module and image $\mathrm{Ker}(\partial_2)$, 
which extends the resolution one step to the left.
(We may assume that $C_3$ is finitely generated, 
since $\mathbb{Z}[G]$ is noetherian.)
It is clear from the Fox-Lyndon partial resolution that 
$\dim_{\mathbb{F}_2}\mathrm{Ker}(\mathbb{F}_2\otimes_{\mathbb{Z}[G]}\partial_2)
=\beta_1(G;\mathbb{F}_2)+1$.
We shall show that $\mathbb{F}_2\otimes_{\mathbb{Z}[G]}\partial_3=0$,
and so $\beta_2(G;\mathbb{F}_2)=\beta_1(G;\mathbb{F}_2)+1$.

Let $\varepsilon_2:\mathbb{Z}[G]\to\mathbb{F}_2$ be the $mod~(2)$
reduction of $\varepsilon$.
Since $\mathrm{Im}(\partial_3)=\mathrm{Ker}(\partial_2)$,
it shall suffice to show that if
\[
\partial_2(ar+bs+ct+du)=0,
\]
for some $a,b,c,d\in\mathbb{Z}[G]$ then 
$\varepsilon_2(a)=\varepsilon_2(b)=
\varepsilon_2(c)=\varepsilon_2(d)=0$.

The coefficients $a,b,c,d$ must satisfy the three equations
\[
a(z^fy-1)+c(z-1)=0,
\]
\[
a(z^f-x)+d(z^\ell-1)=0
\]
and
\[
a(\Sigma_{i=0}^{f-1}z^i )+b\nu+c(zx+1)+d(\Sigma_{j=0}^{\ell-1}z^j-y)=0.
\]
Multiplying the first of these equations by $\nu$ gives $af\nu(y-1)=0$.
Hence $a\nu=0$ and so $a=A(z-1)$, 
for some $A\in\mathbb{Z}[G]$ not involving $z$.
The first equation becomes
\[
A(z-1)(z^fy-1)+c(z-1)=[A(yz^{fm}(\Sigma_{j=0}^{m-1}z^j)-1)+c](z-1)=0,
\]
and so $c=-A(yz^{fm}(\Sigma_{j=0}^{m-1}z^j)-1)+C\nu$,
for some $C\in\mathbb{Z}[G]$ not involving $z$.
Similarly, the second equation becomes
\[
A(z-1)(z^f-x)+d(z^\ell-1)=A(zx+z^f)(z^{m\ell}-1)+d(z^\ell-1)=0,
\]
and so $d=-A(zx+z^f)(\Sigma_{j=0}^{m-1}z^{j\ell})+D\nu$, 
for some $D\in\mathbb{Z}[G]$ not involving $z$.
At this point it  is already clear that $\varepsilon_2(a)=
\varepsilon_2(c)=\varepsilon_2(d)=0$.

Multiplying the third equation by $\nu$ gives
\[
kb\nu+c\nu(x+1)+d\nu(\ell-y)=0.
\]
Rearranged and written out in full, this becomes
\[
kb\nu=(A(ym-1)-Ck)(x+1)\nu+(A(x+1)m-Dk)(\ell-y)\nu.
\]
Since $yx=z^{-f}xy=xzy=xyz^{fm}$ we have $yx\nu=xy\nu$
and so this simplifies to 
\[
kb\nu=(A(m\ell-1)(x+1)-kC(x+1)-kD(\ell-y))\nu.
\]
Write $b=b_1+B(z-1)$, where $b_1$ does not involve $z$.
Then $b\nu=b_1\nu$.
Since the terms $b_1, A,C$ and $D$ do not involve $z$,
and since $m\ell-1=wk$, we get
\[
kb_1=k(Aw(x+1)-C(x+1)-D(\ell-y)).
\]
We may solve for $b_1$,  and so
\[
b=b_1+B(z-1)=wA(x-1)+B(z-1)-C(x+1)-D(\ell-y).
\]
Hence $\varepsilon_2(b)=0$ also, 
so $\mathbb{F}_2\otimes_{\mathbb{Z}[G]}\partial_3=0$ and thus
$\beta_2(G;\mathbb{F}_2)=\beta_1(G;\mathbb{F}_2)+1$.
\end{proof}

\begin{lemma}
\label{metab}
Let $G$ be a finitely generated nilpotent group,
and let $T$ be its torsion subgroup. 
If $G/T\cong\mathbb{Z}^2$ and the Sylow $p$-subgroup of $T$ 
is abelian and non-trivial then 
$\beta_2(G;\mathbb{F}_p)>\beta_1(G;\mathbb{F}_p)$. 
\end{lemma}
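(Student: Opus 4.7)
The plan is to reduce to the case where the torsion subgroup $T$ coincides with its Sylow $p$-subgroup $P$, and then to treat the cyclic and non-cyclic subcases separately. The reduction is straightforward: the Sylow subgroups of $T$ are characteristic in $T$ and therefore normal in $G$. Letting $N$ denote the product of the Sylow subgroups of $T$ of order prime to $p$, the quotient map $G\to G/N$ induces isomorphisms on homology and cohomology with $\mathbb{F}_p$ coefficients, so we may replace $G$ by $G/N$ and assume $T=P$. Then $G/P\cong\mathbb{Z}^2$, and because $G$ is nilpotent with $P$ abelian and normal, the conjugation action of $G/P$ on $P$ is unipotent.

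If $P$ is cyclic, the conclusion is immediate from the preceding lemmas. For $p$ odd the group $\mathrm{Aut}(P)$ is cyclic, so the image of $\gamma_p$ is cyclic and Lemma \ref{P factor} applies; for $p=2$, Lemma \ref{partial3} applies directly.

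The substantive case is $P$ non-cyclic, where $\dim_{\mathbb{F}_p}P/pP>1$ and $P$ has non-trivial $p$-torsion. I would argue by contradiction, assuming $\beta_2(G;\mathbb{F}_p)\leq\beta_1(G;\mathbb{F}_p)$: by Lemma \ref{metab1} this forces $\dim_{\mathbb{F}_p}H^2(P;\mathbb{F}_p)^{G/P}\leq1$. But when $p$ is odd, or when $p=2$ and $P$ has no summand of exponent $2$, Lemma \ref{cycad} shows that $H^2(P;\mathbb{F}_p)$ is canonically subsplit under the action of $\mathrm{Aut}(P)$; the remark following the definition of \emph{canonically subsplit} (together with Lemma \ref{filter} applied to each summand) then produces a fixed subspace of dimension at least $2$ for any unipotent action, including that of $G/P$. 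When $p=2$ and $P$ does have an exponent-$2$ summand, the conclusion $\dim_{\mathbb{F}_2}H^2(P;\mathbb{F}_2)^{G/P}>1$ is exactly Lemma \ref{cycad2}. Either subcase yields the required contradiction.

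The main conceptual obstacles were already cleared in Lemma \ref{partial3} (the delicate Fox--Lyndon calculation in the cyclic $2$-torsion case) and Lemma \ref{cycad2} (the analysis of $H^2$ in the presence of an exponent-$2$ summand); the present argument is essentially the bookkeeping that assembles these ingredients via Lemma \ref{metab1}.
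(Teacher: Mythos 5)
Your proposal is correct and follows essentially the same route as the paper: the same reduction modulo the $p'$-part of $T$, the same appeal to Lemma \ref{metab1} combined with Lemmas \ref{cycad} and \ref{cycad2} to rule out a non-cyclic Sylow $p$-subgroup, and the same disposal of the cyclic case via Lemma \ref{P factor} (for $p$ odd) and Lemma \ref{partial3} (for $p=2$). The only difference is cosmetic: you split into cyclic and non-cyclic cases up front, whereas the paper derives cyclicity as a consequence of the contradiction hypothesis before invoking the same two lemmas.
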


\begin{proof}
Let $N$ be the product of all the Sylow $p'$-subgroups of $T$ with $p'\not=p$,
and let $A$ be the image of $T$ in $\overline{G}=G/N$.
Then $\beta_i(\overline{G};\mathbb{F}_p)=\beta_i(G;\mathbb{F}_p)$, 
for all $i$.
The Sylow $p$-subgroup of $T$ projects isomorphically onto $A$, 
and $\overline{G}/A\cong\mathbb{Z}^2$.
If $\beta_2(G;\mathbb{F}_p)\leq\beta_1(G;\mathbb{F}_p)$
then $\dim_{\mathbb{F}_p}H^2(A;\mathbb{F}_p)^{G/A}\leq1$,
by Lemma \ref{metab1}.
Hence $A$ is cyclic, by Lemmas \ref{cycad} and \ref{cycad2}.
If $p=2$ the result follows from Lemma \ref{partial3},
while if $p$ is odd it follows from Lemma \ref{P factor},
since the automorphism group of a cyclic group of odd $p$-power order is cyclic.
\end{proof}

For the next result we need an analogue of Lemma \ref{K 2end}.

\begin{lemma}
\label{K 1end}
Let $K\cong{T}\rtimes\mathbb{Z}^2$, 
where $T$ is a finite $p$-group,
and $T\not\leq{K'}$.
Then $H^2(K;\mathbb{F}_p)$ is canonically subsplit.
\end{lemma}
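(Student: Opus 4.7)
The plan is to construct two nontrivial $\mathrm{Aut}(K)$-invariant subspaces $V_1,V_2\subseteq H^2(K;\mathbb{F}_p)$ with $V_1\cap V_2=0$. The setup is favorable because $K/T\cong\mathbb{Z}^2$ is torsion-free, so $T$ is precisely the torsion subgroup of $K$ and is therefore characteristic. Every automorphism of $K$ then descends to an automorphism of $\mathbb{Z}^2$, and both the inflation map $\mathrm{inf}\colon H^*(\mathbb{Z}^2;\mathbb{F}_p)\to H^*(K;\mathbb{F}_p)$ and the Universal Coefficient sequence for $K$ are natural with respect to $\mathrm{Aut}(K)$.

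For the first summand, I would take $V_1=\mathrm{inf}(H^2(\mathbb{Z}^2;\mathbb{F}_p))$, which is one-dimensional. Since $K=T\rtimes\mathbb{Z}^2$ is a split extension, the projection $K\to\mathbb{Z}^2$ admits a section, forcing inflation on cohomology to be a split monomorphism; thus $V_1\neq 0$. For the second summand, I would take $V_2=\mathrm{Ext}(K^{ab},\mathbb{F}_p)$, embedded via the UCT injection $\mathrm{Ext}(K^{ab},\mathbb{F}_p)\hookrightarrow H^2(K;\mathbb{F}_p)$. The hypothesis $T\not\leq K'$ ensures that the image of $T$ in $K^{ab}$, namely $T/(T\cap K')$, is a nontrivial finite $p$-group, so $K^{ab}$ has nonzero $p$-torsion and $V_2\neq 0$.

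To verify $V_1\cap V_2=0$, I use that $V_2$ is exactly the kernel of the UCT projection $H^2(K;\mathbb{F}_p)\to\mathrm{Hom}(H_2(K;\mathbb{Z}),\mathbb{F}_p)$, whereas $V_1$ maps nontrivially under this projection: the section $\mathbb{Z}^2\to K$ makes $H_2(K;\mathbb{Z})\to H_2(\mathbb{Z}^2;\mathbb{Z})=\mathbb{Z}$ a split surjection, and by naturality of UCT the generator of $V_1$ pairs nontrivially with a lift of the generator of $H_2(\mathbb{Z}^2;\mathbb{Z})$. Since $\dim V_1=1$, this forces $V_1\cap V_2=0$. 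Both substantive inputs come from the splitting $K=T\rtimes\mathbb{Z}^2$: it simultaneously ensures that inflation survives in $H^2(K;\mathbb{F}_p)$ and that the inflated class is detected by $H_2$, cleanly separating $V_1$ from the $\mathrm{Ext}$ contribution. The main obstacle is thus mild—essentially bookkeeping the naturality—rather than the kind of direct partial resolution computation that powered the preceding metacyclic lemma.
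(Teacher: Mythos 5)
Your proposal is correct and follows essentially the same route as the paper: the paper also takes the two summands to be $Ext(K^{ab},\mathbb{F}_p)$ and the image of inflation along the (split) canonical epimorphism $K\to\mathbb{Z}^2$, noting that $T\not\leq K'$ makes the $Ext$ term non-trivial. Your extra detail on why the two subspaces meet trivially (the $Ext$ term is the kernel of the UCT projection to $Hom(H_2(K;\mathbb{Z}),\mathbb{F}_p)$, while the inflated class survives there because the section splits $H_2(K;\mathbb{Z})\to H_2(\mathbb{Z}^2;\mathbb{Z})$) is a valid elaboration of what the paper leaves implicit.
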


\begin{proof}
Let $\alpha:K\to\mathbb{Z}^2$ be the canonical epimorphism.
Since $\alpha$ splits,
$H^2(\alpha;\mathbb{F}_p)$ is a monomorphism.
The other hypotheses imply $Ext(K^{ab},\mathbb{F}_p)\not=0$.
Hence
$Ext(K^{ab},\mathbb{F}_p)\oplus\mathrm{Im}(H^2(\alpha;\mathbb{F}_p))$ 
is a subspace of $H^2(K;\mathbb{F}_p)$ with the desired properties.
\end{proof}

If $G$ is a homologically balanced, 
metabelian nilpotent group then either $G\cong\mathbb{Z}^3$ 
or $\beta_1(G;\mathbb{Q})\leq2$ and $h(G)\leq4$ \cite{Hi22}.
In the latter case the torsion-free quotient $G/T$ is either free abelian 
of rank $\leq2$, 
or is a $\mathbb{N}il^3$-group $\Gamma_q$ with presentation
\[
\langle{x,y,z}\mid[x,y]=z^q,~xz=zx,~yz=zy\rangle,
\]
or is the $\mathbb{N}il^4$-group $\Omega$ with  presentation
\[
\langle{t,u}\mid[t,[t,[t,u]]]=[u,[t,u]]=1\rangle.
\]
(See \cite[Corollary 8 and Theorems 10 and 15]{Hi22}.)

\begin{theorem}
\label{Z^2}
Let $G$ be a homologically balanced nilpotent group with 
$\beta_1(G;\mathbb{Q})=2$,
and let $T$ be its torsion subgroup.
Then
\begin{enumerate}
\item{}if $h(G)=2$ and $T$ is abelian then $G\cong\mathbb{Z}^2$; 
\item{}if $h(G)=3$  and the outer action $:G\to{Out(T)}$ determined by
conjugation in $G$ factors through $\mathbb{Z}^2$ then $G\cong\Gamma_q$,
for some $q\geq1$; 
\item{}if $h(G)=4$ and $G$ has an abelian normal subgroup $A$ with
$G/AT\cong\mathbb{Z}^2$ then $G\cong\Omega$.
\end{enumerate}
\end{theorem}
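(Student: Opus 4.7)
The plan is to handle all three parts by a uniform strategy: show the torsion subgroup $T$ must be trivial, after which the torsion-free quotient $G/T$ is pinned down by known classifications of torsion-free nilpotent groups of the given Hirsch length and first Betti number. To kill $T$, first reduce (by factoring out the product of all other Sylow subgroups of $T$, which preserves $\mathbb{F}_p$-(co)homology and the hypotheses of each part) to the case that $T$ is a non-trivial $p$-group. Then construct a normal subgroup $K\triangleleft G$ with $G/K\cong\mathbb{Z}^2$ for which $H^2(K;\mathbb{F}_p)$ is canonically subsplit. Since $G$ is nilpotent, the conjugation action of $G/K$ on $H^2(K;\mathbb{F}_p)$ is unipotent, so by the principle at the start of \S4 (which combines Lemma \ref{filter} with the canonical subsplit) we obtain $\dim_{\mathbb{F}_p}H^2(K;\mathbb{F}_p)^{G/K}>1$, contradicting the bound of $1$ supplied by Lemma \ref{metab1}.

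Part (1) is essentially immediate: Lemma \ref{h=2L} gives $G/T\cong\mathbb{Z}^2$, and Lemma \ref{metab} applied to any non-trivial Sylow subgroup of the abelian group $T$ yields $\beta_2(G;\mathbb{F}_p)>\beta_1(G;\mathbb{F}_p)$, contradicting homological balance. For Part (2), since the outer action factors through $\mathbb{Z}^2$ its image in $\mathrm{Out}(T)$ is abelian and torsion-free, so the outer action factors through the canonical projection $G\to(G/T)^{ab}/\mathrm{torsion}\cong\mathbb{Z}^2$ (using $\beta_1(G;\mathbb{Q})=2$). Taking $K$ to be the preimage of the kernel of this projection gives $K\supseteq T$, $G/K\cong\mathbb{Z}^2$, and $K/T\cong\mathbb{Z}$, with every element of $K$ acting on $T$ by an inner automorphism. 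Adjusting a chosen lift of a generator of $K/T$ by a suitable element of $T$ centralises $T$, yielding $K\cong T\times\mathbb{Z}$; Lemma \ref{K 2end} then supplies the canonical subsplit when $T$ is a non-trivial $p$-group, and the common strategy forces $T=1$. The classification of torsion-free nilpotent groups with $h=3$ and $\beta_1=2$ (lattices in the $3$-dimensional Heisenberg group) then gives $G\cong\Gamma_q$ for some $q\geq1$.

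For Part (3), take $K=AT$. Then $K\triangleleft G$, $G/K\cong\mathbb{Z}^2$, and $K/T\cong A/(A\cap T)\cong\mathbb{Z}^2$. Lifting a $\mathbb{Z}$-basis of $K/T$ to elements $a_1,a_2\in A$ gives a subgroup $\langle a_1,a_2\rangle\leq A$ isomorphic to $\mathbb{Z}^2$ (free of rank $2$ since $a_1,a_2$ are $\mathbb{Z}$-independent modulo the torsion subgroup $A\cap T$ of the abelian group $A$) meeting $T$ trivially, so $K\cong T\rtimes\mathbb{Z}^2$. To apply Lemma \ref{K 1end} I need $T\not\leq K'$ when $T\neq 1$: the conjugation action of $\mathbb{Z}^2$ on the finite abelian $p$-group $T^{ab}$ is unipotent (as $K$ is nilpotent), so by Lemma \ref{filter} the coinvariants $(T^{ab})_{\mathbb{Z}^2}$ are non-trivial; and for a split extension $K^{ab}\cong(T^{ab})_{\mathbb{Z}^2}\oplus\mathbb{Z}^2$, so the image of $T$ in $K^{ab}$ is non-zero and $T\not\leq K'$. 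Lemma \ref{K 1end} then yields the canonical subsplit, the common strategy gives $T=1$, and the classification in \cite{Hi22} of torsion-free metabelian nilpotent groups of $h=4$ and $\beta_1=2$ identifies $G$ as $\Omega$. I expect Part (3) to be the main obstacle: both the splitting $K\cong T\rtimes\mathbb{Z}^2$ and the inequality $T\not\leq K'$ rely on structural properties of $K=AT$ that go beyond the abstract condition $G/K\cong\mathbb{Z}^2$, and the coinvariants step needs care to ensure the semidirect-product formula for $K^{ab}$ applies.
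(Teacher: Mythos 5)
Your proposal is correct and follows essentially the same route as the paper: the same subgroups $K$ (the preimage of the torsion subgroup of $G^{ab}$, which is $T$ in part (1) and $T\times\mathbb{Z}$ in part (2), and $AT$ in part (3)), the same key lemmas (\ref{metab}, \ref{metab1}, \ref{K 2end}, \ref{K 1end} together with the canonical-subsplit principle of \S4), and the same final appeal to the classification of the torsion-free possibilities. The details you supply --- adjusting a lift to split $K\cong T\times\mathbb{Z}$, lifting a basis of $K/T$ to split $K\cong T\rtimes\mathbb{Z}^2$, and the coinvariants argument for $T\not\leq K'$ --- are exactly the steps the paper leaves implicit.
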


\begin{proof}
Let $K$ be the preimage in $G$ of the torsion subgroup of $G^{ab}$.
Then $TG'\leq{K}$, $G/K\cong\mathbb{Z}^2$, and $K/G'$ is (finite) cyclic, 
since $\beta_1(G;\mathbb{F}_p)\leq3$ for all primes $p$.

If $h(G)=2$ then $K=T$ and $G/T\cong\mathbb{Z}^2$.
Hence if $T$ is abelian then $T=1$, by Lemma \ref{metab},
and so $G\cong\mathbb{Z}^2$.

If $h(G)=3$ and the outer action $:G\to{Out(T)}$ determined by
conjugation in $G$ factors through $\mathbb{Z}^2$
then $K\cong{T}\times\mathbb{Z}$.
Thus if $T\not=1$ then 
$\beta_2(G;\mathbb{F}_p)>\beta_1(G;\mathbb{F}_p)$,
for any prime $p$ dividing $|T|$,
by Lemmas \ref{metab1} and \ref{K 2end}.
This contradicts the hypothesis that $G$ has a balanced presentation.

If $h(G)=4$ and $G$ has an abelian normal subgroup $A$ with
such that $G/AT\cong\mathbb{Z}^2$ then $K=AT$,
and $\overline{A}=A/A\cap{T}\cong\mathbb{Z}^2$.
Since $K$ is nilpotent, 
the action of the finite group $T/A\cap{T}$ on $\overline{A}$ is trivial.
Hence $K/A\cap{T}\cong\overline{A}\times(T/A\cap{T})$,
and so $K\cong{T}\rtimes\mathbb{Z}^2$.
Moreover,  if $T\not=1$ then $T\not\leq{K'}$.
Lemmas \ref{metab1} and \ref{K 1end} 
(together with Lemmas \ref{cycad} and \ref{cycad2}) then give 
a similar contradiction.

In parts (2) and (3) the group $G$ is torsion-free,
and so must be one of the known examples given above.
\end{proof}

Imposing a stronger constraint gives a clearer statement.

\begin{cor}
\label{Z2cor}
If $G$ is a homologically balanced nilpotent group with 
an abelian normal subgroup $A$ such that $G/A\cong\mathbb{Z}^2$
then $G\cong\mathbb{Z}^2$,  $\Gamma_q$ (for $q\geq1$)
or $\Omega$. 
\qed
\end{cor}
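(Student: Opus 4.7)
My plan is to reduce the corollary to Theorem \ref{Z^2} by matching the hypothesis --- $G/A\cong\mathbb{Z}^2$ with $A$ abelian and normal --- to the appropriate case of that theorem, depending on the Hirsch length of $G$. The key structural observation is that since $G/A$ is torsion-free, the torsion subgroup $T$ of $G$ is contained in $A$, and in particular $T$ is abelian. Moreover $G$ is metabelian, so by the results of \cite{Hi22} we have $h(G)\leq4$ and either $G\cong\mathbb{Z}^3$ (covered by the convention $\Gamma_0=\mathbb{Z}^3$ in the defining presentation) or $\beta_1(G;\mathbb{Q})\leq2$. Since $G/A\cong\mathbb{Z}^2$ forces $\beta_1(G;\mathbb{Q})\geq2$, I may assume $\beta_1(G;\mathbb{Q})=2$, and then $h(G)=2+h(A)\in\{2,3,4\}$.

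I would then split into three cases by Hirsch length. If $h(G)=2$, then $h(A)=0$ so $A$ is finite, and combining $T\leq A$ with $A\leq T$ gives $A=T$ abelian; Theorem \ref{Z^2}(1) yields $G\cong\mathbb{Z}^2$. If $h(G)=3$, then $A$ has rank one, and because $A$ is abelian we have $A\leq C_G(T)$; consequently the outer action $G\to\mathrm{Out}(T)$ factors through $G/A\cong\mathbb{Z}^2$, so Theorem \ref{Z^2}(2) gives $G\cong\Gamma_q$ for some $q\geq1$. If $h(G)=4$, then $T\leq A$ forces $AT=A$, so $G/AT=G/A\cong\mathbb{Z}^2$, and Theorem \ref{Z^2}(3) gives $G\cong\Omega$.

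All the substantial work --- the Wang sequence estimates, the canonical subsplit criteria of Lemmas \ref{cycad}, \ref{cycad2} and \ref{K 1end}, and the Fox-Lyndon computation of Lemma \ref{partial3} --- is already built into Theorem \ref{Z^2}, so the corollary is really a matter of packaging. The only thing to verify is that the auxiliary hypotheses of each part of Theorem \ref{Z^2} (abelianness of $T$, factoring of the outer action, and the identification $G/AT\cong\mathbb{Z}^2$) all follow from the single fact that $T\leq A$, which they do in each Hirsch-length range; there is no substantial obstacle beyond this bookkeeping.
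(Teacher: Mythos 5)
Your proposal is correct and matches the paper's (unwritten) argument: the corollary is stated with a \qed as an immediate consequence of Theorem \ref{Z^2}, and your reduction --- noting that $G/A\cong\mathbb{Z}^2$ forces $T\leq A$ and $G'\leq A$, hence $T$ abelian and $G$ metabelian with $h(G)\leq4$, then splitting by Hirsch length and checking that $T\leq A$ supplies the auxiliary hypothesis of each part of the theorem --- is exactly the intended bookkeeping. Your remark that $G\cong\mathbb{Z}^3$ also satisfies the hypotheses is a fair catch: the paper's list (with $q\geq1$) tacitly excludes it under the standing convention $G\not\cong\mathbb{Z}^3$ from \cite{Hi22}, whereas you absorb it as $\Gamma_0$; either way the substance of the argument is unaffected.
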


Note that the second hypotheses in parts (2) and (3) of the theorem 
are not by themselves equivalent to assuming that $G$ is metabelian,
while the hypothesis in the corollary is somewhat stronger.
(On the other hand, it includes all 2-generated metabelian nilpotent groups $G$
with $h(G)>1$.)

The above work leaves open the following questions, for $G$ a nilpotent group with torsion subgroup $T$ and a balanced presentation.
\begin{enumerate}
\item{}if $h(G)=1$ is $H_2(T;\mathbb{Z})=0$?
\item If $h(G)=2, 3$ or 4 and $G$ is metabelian, is $T=1$?
\item{}more ambitiously, if $h(G)>1$ is $T=1$?
\end{enumerate}

\noindent{\bf Acknowledgment.}
I would like to thank Peter Kropholler for reminding me that 
nilpotent groups are often best studied by induction on the abelian case
and Eamonn O'Brien for his advice on $p$-groups.


\end{document}